\newtheorem{theorem}{Theorem}
\newtheorem{lemma}[theorem]{Lemma}
\newtheorem{corollary}[theorem]{Corollary}
\newtheorem{proposition}[theorem]{Proposition}
\newtheorem{definition}[theorem]{Definition}
\newtheorem{convention}[theorem]{Convention}
\newcommand{\dotline}[2]{\put(#1){\circle*{2}}\qbezier(#1)(#1)(#2)}
\newcommand{\Z}{\mathbb{Z}}
\newcommand{\N}{\mathbb{N}}
\renewcommand{\r}{\mathrm}
\newcommand{\pq}{\preccurlyeq}
\newcommand{\ad}[1]{\r{ad}_{#1}}
\newcommand{\B}[1]{\makebox[2em]{$#1$}}
\newcommand{\BB}[1]{\makebox[20em]{#1}}
\begin{document}

\begin{center}
\texttt{Comments, corrections,
and related references welcomed, as always!}\\[.5em]
{\TeX}ed \today
\vspace{2em}
\end{center}

\title%
{Some results on counting linearizations of posets}%
\thanks{
This note is readable online at
\url{http://math.berkeley.edu/~gbergman/papers/unpub}
and \url{http://arxiv.org/abs/arXiv:1802.01712}\,.
}

\subjclass[2010]{Primary: 05C31, 06A07.
Secondary: 05A10, 16U99.}
\keywords{linearization-count of a poset, sign-imbalance of a poset,
lexicographic sum of posets, integer polynomial}

\author{George M. Bergman}
\address{University of California\\
Berkeley, CA 94720-3840, USA}
\email{gbergman@math.berkeley.edu}

\begin{abstract}
In section~\ref{S.ord+} we consider a $\!3\!$-tuple $S=(|S|,\pq,E)$
where $|S|$ is a finite set, $\pq$ a partial ordering on $|S|,$
and $E$ a set of unordered pairs of distinct members of $|S|,$
and study, as a function of $n\geq 0,$ the number of maps
$\varphi:|S|\to\{1,\dots,n\}$ which are both isotone with respect
to the ordering $\pq,$ and have the property that
$\varphi(x)\neq \varphi(y)$ whenever $\{x,y\}\in E.$
We prove a number-theoretic result about this function,
and use it in section~\ref{S.GPH} to recover a
ring-theoretic identity of G.\,P.\,Hochschild.

In section~\ref{S.sign} we generalize a result of
R.\,Stanley on the sign-imbalance of posets in which the
lengths of all maximal chains have the same parity.

In sections~\ref{S.L_pm}-\ref{S.iteration} we study
the linearization-count and sign-imbalance
of a lexicographic sum of $n$ finite posets $P_i$ $(1\leq i\leq n)$
over an $\!n\!$-element poset $P_0.$
We note how to compute these values from the corresponding
counts for the given posets $P_i,$ and for a
lexicographic sum over $P_0$ of chains of lengths $\r{card}(P_i).$
This makes the behavior of lexicographic sums of chains
over a finite poset $P_0$ of interest, and we obtain some
general results on
the linearization-count and sign-imbalance of these objects.
\end{abstract}
\maketitle

This material is far from my areas of expertise.
The referees for two journals were not enthusiastic
about it, so I have decided not to publish it, but I am keeping
it available online in case it should prove of interest to someone.

\section{Denominators of order-chromatic polynomials}\label{S.ord+}
In this section, for $S=(|S|,\pq,E)$
as in the first paragraph of the abstract, we show that
the function of $n$ defined in that sentence is a polynomial with
rational coefficients, and we obtain a bound on the primes dividing
its denominator.
This bound is used, as indicated, in section~\ref{S.GPH}
(an appendix, which depends only on the present section).

Some background:
Recall that the {\em chromatic polynomial} of a finite graph $G$ is the
function $f$ associating to every positive integer $n$ the number of
colorings of the vertices of $G$ with $n$ colors (not all of
which need be used), such that adjacent vertices have different colors.
(We will recover below the fact that $f$ is
indeed a polynomial function.)

Richard Stanley notes in~\cite{RPS70} that if $P=(|P|,\pq)$
is a finite poset
(partially ordered set), then the function  $e$ associating
to every positive integer $n$ the number of isotone
maps $\varphi:P\to\{1,\dots,n\}$ (i.e., maps
$\varphi:|P|\to\{1,\dots,n\}$
such that $x\pq y\implies \varphi(x)\leq \varphi(y))$ is also given by
a polynomial, conceptually similar to the chromatic polynomial
of a graph, and that the same is true of the function
$\overline{e}$ associating to $n$ the number of {\em strictly} isotone
maps $P\to\{1,\dots,n\}$ (maps such that for distinct $x,\,y,$
one has $x\pq y\implies \varphi(x)<\varphi(y)).$

To see the above fact for the function $e,$ let us break the process of
choosing an isotone map $\varphi:P\to\{1,\dots,n\}$ into three steps.
First, decide which elements will fall together under $\varphi.$
Calling the quotient poset arising from these identifications
$P',$ choose, next, the linear order which the embedded image
of $|P'|$ in $\{1,\dots,n\}$ is to have.
Finally, choose a way of embedding the resulting linearly ordered set
into $\{1,\dots,n\}.$
Now if we write
\begin{equation}\begin{minipage}[c]{35pc}\label{d.m=}
$m\ =\ \r{card}(P),$
\end{minipage}\end{equation}
then the number of elements of each
$P'$ will be some $m'\leq m,$ and once a linear ordering of
such a $|P'|$
has been chosen, the number of embeddings of the resulting
linearly ordered set in $\{1,\dots,n\}$ will be the number
of ways of choosing $m'$ out of $n$ elements, which is the
binomial coefficient $\binom{n}{\,m'}=n(n-1)\dots(n-m'+1)/m'\,!\,.$
This is a polynomial in $n$ of degree $m'$ over the rational numbers.
Summing the polynomials arising in this way
from all linearly ordered sets obtained as in the
first two steps above, we get a polynomial of degree $m.$

Now the coefficients of this polynomial can clearly be
written to the common denominator $m!,$ so every prime dividing
the least common denominator of those coefficients is $\leq m.$
But we can get a better bound on such primes if
the Hasse diagram of $P$ has more than one connected component.

To do so, let us modify the procedure described above.
The first step, choosing which elements of $|P|$ fall together,
remains the same; again let $P'$ denote the set so obtained, given
with the weakest partial ordering making the map $P\to P'$ isotone.
(Here we are assuming that $|P'|$ is a set-theoretic image of $|P|$
which admits a partial ordering making the map $|P|\to |P'|$
isotone.
The weakest such partial ordering is then
the intersection of all such partial orderings.)
At the second step, rather than strengthening that ordering
to a linear ordering on $|P'|,$ let us strengthen it to an
ordering which is linear on each connected component of the
Hasse diagram of $P',$
but leaves elements of distinct connected components incomparable.
Finally, we choose a one-to-one isotone map of the
resulting poset into $\{1,\dots,n\}.$

How many choices are there at that last step?
If, under the ordering chosen at the second step,
the connected components of $P'$ are chains $P'_1,\dots,P'_k,$ of
cardinalities $m'_1,\dots,m'_k$ respectively, then there are
$\binom{n}{\,m'_1}$ isotone embeddings of $P'_1$ in $\{1,\dots,n\},$
then $\binom{\,n-m'_1}{m'_2}$ such embeddings of $P'_2$ in
the remaining elements of $\{1,\dots,n\},$ and so on.
So the total number of
isotone embeddings is $\binom{n\,}{\,m'_1}\binom{\,n-m'_1}{m'_2}\dots
\binom{\,n-m'_1-\ldots-m'_{k-1}}{m'_k},$ which,
writing $m'_1+\dots+m'_k = m' = \r{card}(|P'|),$ is
the multinomial coefficient
\begin{equation}\begin{minipage}[c]{35pc}\label{d.multinom}
$n(n-1)\dots (n-m'+1)/m_1!\,\dots m_k!\ .$
\end{minipage}\end{equation}
This is a polynomial in $n$ whose denominator is divisible only
by primes that are less than or equal to one of the $m'_i,$
so we need to know how large the $m'_i,$ the cardinalities of
the components of $P',$ can be.

Suppose the original poset $P$ had $c$ connected components.
Each identification of two elements of $P$ reduces
the number of connected components by at most one, hence
if $P'$ has $m'$ elements, and so has undergone
$m-m'$ identifications, it still has at least $c-(m-m')$ components.
The largest that such a component can be is $m'$ minus
the number of {\em other} components,
so each component has at most $m'-(c-(m-m')-1)=m-c+1$ elements.

Thus, $m-c+1$ is an upper bound on the primes that can
occur in the denominator of~\eqref{d.multinom}.
Summing over all choices of our image $P'$ of $P,$
and all componentwise linearizations thereof, we conclude
that the polynomial giving the number of isotone maps
$P\to\{1,\dots,n\}$ has denominator divisible only by
primes less than or equal to $m-c+1.$

As noted in the abstract, we actually want to count
the smaller set of such maps $\varphi:P\to \{1,\dots,n\}$
that satisfy an additional set of restrictions on which pairs of
elements of $P$ can fall together under $\varphi.$
But such restrictions do not affect the above
argument: they limit the set of images $P'$ that we
enumerate at the first step, while our bound on primes
in the denominator comes from the last step.
So we have

\begin{theorem}\label{T.ord+}
Let $S$ be a $\!3\!$-tuple $(|S|,\pq,E),$ where $|S|$ is a
finite set, $\pq$ a partial ordering on $|S|,$
and $E$ a set of unordered pairs of distinct members of $|S|.$
Let $c$ be the number
of connected components of the Hasse diagram of the poset $(|S|,\pq),$
and for each $n\geq 0,$ let $C(S,n)$ be the number of maps
$\varphi:|S|\to\{1,\dots,n\}$ which are both isotone with respect
to the ordering $\pq,$ and have the property that
$\varphi(x)\neq \varphi(y)$ whenever $\{x,y\}\in E.$

Then $C(S,n)$ is a polynomial in $n$ with rational coefficients,
such that all primes dividing the denominators
of those coefficients are $\leq \r{card}(|S|)-c+1.$\qed
\end{theorem}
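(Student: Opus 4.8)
The plan is to reconstruct the counting argument sketched above as an explicit finite sum of polynomials, one summand for each combinatorial ``type'' of the map $\varphi$, and then to check that each summand already has the desired denominator bound. First I would dispose of the set $E$: imposing $\varphi(x)\neq\varphi(y)$ for $\{x,y\}\in E$ only forbids certain identifications of elements of $|S|$, so it merely deletes some summands from the sum we are about to build; it does not change the shape of any individual summand, and in particular has no effect on the denominators. Hence it suffices to handle the pure poset $P=(|S|,\pq)$ and verify that every summand we retain has denominator divisible only by primes $\leq\r{card}(|S|)-c+1$.

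Next I would stratify the isotone maps $\varphi:|S|\to\{1,\dots,n\}$ by the partition of $|S|$ into the fibers $\varphi^{-1}(i)$. Each partition that can occur (one admitting a partial order making the quotient map isotone, and, in the general case, compatible with $E$) determines a quotient poset $P'=(|P'|,\pq')$, where $\pq'$ is the weakest partial order making $|S|\to|P'|$ isotone; and $\varphi$ is then an isotone injection $|P'|\hookrightarrow\{1,\dots,n\}$ precomposed with the quotient map $|S|\to|P'|$. So $C(S,n)$ is the sum, over the finitely many admissible $P'$, of the number of isotone injections $|P'|\hookrightarrow\{1,\dots,n\}$.

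Now fix such a $P'$ with $m'=\r{card}(|P'|)$, and stratify its isotone injections once more. An isotone injection $\psi:|P'|\hookrightarrow\{1,\dots,n\}$ pulls back the linear order of $\{1,\dots,n\}$ to a linear order $\leq_\psi$ on $|P'|$ refining $\pq'$; restricting $\leq_\psi$ to each connected component of the Hasse diagram of $P'$, and leaving elements of distinct components incomparable, yields a ``componentwise linearization'' $Q$ of $P'$, with respect to which $\psi$ is still an isotone injection into $\{1,\dots,n\}$. Conversely any componentwise linearization $Q$ together with an isotone injection $Q\hookrightarrow\{1,\dots,n\}$ gives back such a $\psi$, and since a linear order on a finite set admits no proper refinement, $\psi$ recovers $Q$ uniquely component by component. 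Thus the isotone injections of $P'$ are partitioned among the finitely many componentwise linearizations $Q$, and it remains to count isotone injections of a fixed $Q$ into $\{1,\dots,n\}$: if the components of $Q$ are chains of sizes $m'_1,\dots,m'_k$ with $m'_1+\dots+m'_k=m'$, embedding them one at a time gives $\binom{n}{m'_1}\binom{n-m'_1}{m'_2}\cdots\binom{n-m'_1-\dots-m'_{k-1}}{m'_k} = n(n-1)\cdots(n-m'+1)/(m'_1!\cdots m'_k!)$, a polynomial in $n$ whose coefficients have denominator dividing $m'_1!\cdots m'_k!$, hence involving only primes $\leq\max_i m'_i$.

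The last point, and the only one needing a little care, is the bound on $\max_i m'_i$: writing $m=\r{card}(|S|)$, the poset $P'$ (and hence $Q$) arose from $P$ by $m-m'$ identifications, each of which merges at most two connected components into one, so $Q$ has at least $c-(m-m')$ components; a component of size $\mu$ then leaves $m'-\mu$ elements for the remaining $\geq c-(m-m')-1$ components, forcing $\mu\leq m'-(c-(m-m')-1)=m-c+1$. Summing the polynomials over all admissible $P'$ and all their componentwise linearizations $Q$, we conclude that $C(S,n)$ is a polynomial in $n$ whose coefficients have denominator divisible only by primes $\leq m-c+1$. I expect the main obstacle to be purely bookkeeping: confirming that the two-stage stratification — first by fiber partition, then by componentwise linearization — really is a bijection onto the isotone maps (so that the polynomial counts may simply be added), after which the prime bound drops out of the multinomial formula.
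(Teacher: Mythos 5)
Your proposal is correct and follows essentially the same route as the paper: stratify by the fiber partition to get a quotient poset $P'$, then by componentwise linearizations $Q$, count isotone embeddings of $Q$ by the multinomial coefficient, and bound the component sizes by $m-c+1$ via the observation that each identification destroys at most one connected component. The only cosmetic difference is that you dispose of the constraint set $E$ at the outset rather than at the end, and you spell out the bijectivity of the two-stage stratification slightly more explicitly than the paper does.
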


Note that in the case of the above result where
$\pq$ is the trivial ordering, i.e., where $S$ is an antichain,
$C(S,n)$ is the chromatic polynomial of the graph $(|S|,E).$
In that case $c=\r{card}(|S|),$ so the theorem says $C(S,n)$ is
a polynomial with {\em integer} coefficients; which is indeed
true of chromatic polynomials \cite[\S IX.2]{tutte}.

Theorem~\ref{T.ord+} actually remains true if we allow arbitrary
Boolean conditions on which elements fall together.
For instance, for particular $x_1,$ $x_2,$ $y_1,$ $y_2\in |S|,$
the condition, ``if $x_1$ falls together with $y_1,$ then
$x_2$ falls together with $y_2$'' would cause no difficulties
with the proof.
I have assumed the conditions to have the simple form stated in
the theorem because such conditions seem to occur naturally;
in particular, the problem that motivated this result,
studied in section~\ref{S.GPH} below, has conditions of that form.
The result for arbitrary Boolean expressions in such
conditions is, in any case, easily deduced from
Theorem~\ref{T.ord+} by inclusion-exclusion considerations.

On the other hand, Theorem~\ref{T.ord+} does not remain true if we
allow our additional restrictions to involve the {\em order}
relations among elements $\varphi(x);$
for instance, if for some particular $x,y\in |S|$ we
consider only maps $\varphi$ such that $\varphi(x)\leq \varphi(y).$
To impose such a condition is equivalent to
replacing the $P$ of our introductory
discussion by a poset with its order relation $\pq$ strengthened;
and this can fuse distinct connected components without
reducing the number of elements.
So, for instance, if $(|S|,\pq)$ is an $\!m\!$-element antichain,
then an obvious set of conditions of the above form restricts
us to maps $|S|\to\{1,\dots,n\}$ which are isotone
with respect to a particular total ordering $\pq'$ of $|S|,$
and the number of such maps is $\binom{n+m-1}{m},$
a polynomial with denominator $m!\,,$ despite the fact
that the original antichain ordering $\pq$ satisfies $m-c+1=1.$

The following consequence of Theorem~\ref{T.ord+} will be
used in section~\ref{S.GPH}.
Recall that the chromatic number of a graph is the least
positive integer $n$ at which the chromatic polynomial
of the graph is nonzero.

\begin{corollary}\label{C.==0}
Let $S=(|S|,\pq,E)$ and $c$ be as in Theorem~\ref{T.ord+}, and
$p$ be any prime greater than $\r{card}(|S|)-c+1.$

Then for every positive integer $n$ whose residue modulo $p$ is less
than the chromatic number of the graph $(|S|,E),$
the integer $C(S,n)$ is divisible by $p.$
In particular, if $|S|$ is nonempty, then $C(S,n)$ is divisible by~$p$
whenever $n$ is divisible by $p.$
\end{corollary}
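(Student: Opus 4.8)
The plan is to combine the denominator estimate of Theorem~\ref{T.ord+} with two elementary facts: first, that an integer-valued polynomial whose coefficients have denominators prime to $p$ takes values whose residue modulo $p$ depends only on the residue modulo $p$ of the argument; and second, that $C(S,r)$ vanishes for every nonnegative integer $r$ strictly below the chromatic number of the graph $(|S|,E),$ since any map counted by $C(S,r)$ is in particular a proper colouring of that graph by the colour set $\{1,\dots,r\}.$ Throughout write $m=\r{card}(|S|);$ we may assume $|S|\neq\emptyset,$ the case $|S|=\emptyset$ (where $C(S,n)\equiv 1$) being a degenerate boundary case.

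For the first fact, let $D$ be a common denominator of the coefficients of the polynomial $C(S,n).$ By Theorem~\ref{T.ord+} we may take $D$ to be a product of primes $\leq m-c+1,$ so that $\gcd(D,p)=1$ and $D\cdot C(S,n)$ has integer coefficients. If $a$ and $b$ are integers with $a\equiv b\pmod p,$ then $a-b$ divides $a^k-b^k$ for every $k\geq 0,$ and hence $p$ divides $D\cdot C(S,a)-D\cdot C(S,b);$ since $D$ is invertible modulo $p$ while $C(S,a)$ and $C(S,b)$ are themselves integers, we conclude that $C(S,a)\equiv C(S,b)\pmod p.$

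Now let $n$ be a positive integer whose residue $r\in\{0,1,\dots,p-1\}$ modulo $p$ is less than the chromatic number $\chi$ of $(|S|,E).$ By the previous paragraph $C(S,n)\equiv C(S,r)\pmod p,$ so it suffices to show $C(S,r)=0.$ If $r\geq 1,$ then $r<\chi$ says that $(|S|,E)$ has no proper colouring with $r$ colours, whereas every map counted by $C(S,r)$ restricts to such a colouring; hence $C(S,r)=0.$ If $r=0,$ then $C(S,0)$ counts the maps from the nonempty set $|S|$ into $\{1,\dots,0\}=\emptyset,$ of which there are none, so again $C(S,r)=0.$ This proves the first assertion, and for the ``in particular'' statement it remains to observe that $p\mid n$ forces $r=0,$ while $0<\chi$ since the chromatic number is a positive integer. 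I do not expect any real obstacle in carrying this out; the only points meriting care are the handling of the residue $r=0$ and of the empty-set boundary case, and the (routine) check that the congruence passes through the invertible factor $D,$ which works precisely because $p\nmid D.$
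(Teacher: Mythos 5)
Your proposal is correct and follows essentially the same route as the paper's own proof: reduce modulo $p$ using the fact that the denominator of the polynomial $C(S,n)$ is prime to $p$ (Theorem~\ref{T.ord+}), then observe that $C(S,n_0)$ vanishes for $n_0$ below the chromatic number. Your explicit treatment of the residue $r=0$ (via $C(S,0)=0$ for nonempty $|S|$) is in fact slightly more careful than the paper, which only discusses positive $n_0$ in that range.
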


\begin{proof}
By Theorem~\ref{T.ord+} we may
write $C(S,n)=f(n)/r,$ where $f$ is a polynomial with
integer coefficients, and $r$ an integer not divisible by $p.$
The residue of $f(n)$ modulo~$p$ depends only on the residue
of $n$ modulo~$p,$ hence as $r$ is invertible modulo~$p,$
the same is true of the residue of $C(S,n)=f(n)/r.$
Now for positive $n_0$ less than the chromatic number of
$(|S|,E),$ there are no maps of $|S|$ into $\{1,\dots,n_0\}$
which distinguish $x$ and $y$ whenever $\{x,y\}\in E,$ in
particular, no isotone maps with that property; hence
$C(S,n_0)=0$ for such $n_0.$
Hence for $n$ congruent to a value $n_0$ in that range, we have
$C(S,n)\equiv_{\pmod{p}} C(S,n_0) = 0.$

The final sentence follows because every nonempty
graph has chromatic number greater than zero.
\end{proof}

The remaining sections of this paper, other than the
appendix section~\ref{S.GPH}, are related to this one only
in that they involve counting linear images of posets, and use
somewhat similar methods.

A belated remark on notation:  At the end of the third
paragraph of this section, I spoke of the condition that ``for
distinct $x,\,y,$ one has $x\pq y\implies \varphi(x)<\varphi(y)$'',
where it would be more natural to say that for all $x,$ $y,$
one has $x\prec y\implies \varphi(x)<\varphi(y).$
But I am avoiding the use of ``$\prec$'' for strict
$\!\pq\!$-inequality because many authors
use $x\prec y$ for the condition that $y$ {\em covers} $x$
with respect to a partial order $\leq,$ i.e., that
$x<y$ and there is no element $z$ with $x<z<y.$
At a few places below, we will indeed consider the covering relation,
though we will not introduce any notation for it.
However, I felt it best to avoid confusion with a notation
commonly used by others.

\section{The sign-imbalance of a bicolorable poset}\label{S.sign}

If $X$ is a set of $n<\infty$ elements, then there are precisely $n!$
bijections $X\to\{1,\dots,n\},$ and given one such
bijection $b,$ we can write any other as $\pi\,b$
for a unique permutation $\pi$ of $\{1,\dots,n\}.$
In this situation, one calls a bijection
$X\to\{1,\dots,n\}$ ``even'' or ``odd''
(relative to $b),$ or equivalently, of sign $+1$ or $-1,$
according to whether $\pi$ is an even or an odd permutation.
Clearly, replacing $b$ with another bijection $X\to\{1,\dots,n\}$
either preserves or reverses this
classification of bijections.

If $P=(|P|,\pq)$ is an $\!n\!$-element poset, then the
{\em isotone} bijections $|P|\to\{1,\dots,n\}$ correspond naturally
to linearizations of $P$ (total orderings refining $\pq),$
and one calls a linearization even or odd if the corresponding
bijection is so.
The number of linearizations that are even minus the number
that are odd is called the {\em sign-imbalance} of the
poset $P$ (see \cite{RPS05} and papers referenced there).
If that number is zero, $P$ is called {\em sign-balanced}.
The sign-imbalance is, of course, unique only up to sign,
unless a particular ordering $b$ relative to which it is calculated
has been specified.

In studying the sign-imbalance of a finite poset $P,$
our method will again be to break the choice of a linearization
of $P$ into steps, such that for each partial ordering achieved at the
next-to-last step, the number of choices available at the final step
is easy to describe.

Since only one-to-one images of $P$ are considered, we have no
use for a graph structure $E$ on $|P|$ restricting which
elements fall together.
We will, however, make use of the graph structure of the
Hasse diagram of $P,$ via

\begin{definition}\label{D.bicolor}
A {\em bicoloring} of a finite poset $P=(|P|,\pq)$ will mean
a coloring of its Hasse graph with two colors; that is,
a partition of $|P|$ into two subsets, such that whenever one element
of $|P|$ covers another, the elements
belong to different members of the partition
\textup{(}``have opposite colors''\textup{)}.
\end{definition}

Thus, $P$ admits a bicoloring if and only if every cycle
in its Hasse diagram has an even number of vertices.

If $P=(|P|,\pq)$ is a finite poset on which we have a bicoloring,
and $\leq$ is some total ordering
refining the given partial ordering $\pq,$ let us break $|P|$
into the ``blocks'' of elements of the same color that
are consecutive under ${\leq}\,.$
That is, let $|P|_1$ be the subset of $|P|$ consisting of the least
element under ${\leq}\,,$ and all elements of the same color (if
any) that follow it under
$\leq$ without an intermediate element of the
opposite color; let $|P|_2$ consist of the first element of
the other color, and those that similarly follow it,
$|P|_3$ the next block (which has the same color as $|P|_1),$ and so on.
Note that within each $|P|_i,$ all elements are incomparable
under $\pq,$ since by the definition of a bicoloring,
any two comparable elements of the same color have
an element of the opposite color between them under ${\pq}\,,$
and hence under $\leq.$

In the above situation, we can define an intermediate
partial ordering $\leq_\r{pre}$ on $|P|,$ under
which members of each $|P|_i$ are incomparable,
as they are under $\pq,$ while
whenever $x\in |P|_i$ and $y\in |P|_j$ with $i<j,$
we let $x\leq_\r{pre} y.$
We shall call the partial ordering $\leq_\r{pre}$
the ``prelinearization'' of $\pq$
determined by the linearization ${\leq}\,.$

Let us call linearizations $\leq$ and $\leq'$ of $\pq$
``block-equivalent'' if the prelinearizations
$\leq_\r{pre}$ and $\leq'_\r{pre}$ are the same.
Given a prelinearization of $P,$ with block decomposition
$|P| = |P|_1\cup |P|_2\cup\dots\cup |P|_m,$ the
set of linearizations $\leq$ of $\pq$
that determine that prelinearization (a
block-equivalence-class of linearizations of $\pq)$ will have
exactly $\r{card}(|P|_1)!\ \r{card}(|P|_2)!\,\dots\,\r{card}(|P|_m)!$
elements, every such linearization being obtained by choosing
an arbitrary linearization of each $|P|_i.$
The sign of each such linearization of $P$ will be, up to a fixed
factor $\pm 1,$
the product of the signs of the linearizations of these subsets.

Now if any $|P|_i$ has cardinality greater than $1,$
then the $\r{card}(|P|_i)!$ choices for the ordering of that
block will be half even and half odd, so the parities of the
linearizations in the whole block-equivalence class will
be equally split between odd and even.
Hence in computing the summation of $\!+1\!$'s and $\!-1\!$'s that
gives the sign-imbalance of $P,$ it suffices
to look at linearizations in which each same-color block
is a singleton; in other words, linearizations for
which the given coloring of $P$ is still a bicoloring.
Let us call such a linearization {\em compatible} with the given
bicoloring.
Then the above discussion yields

\begin{theorem}\label{T.bicolor}
If $P=(|P|,\pq)$ is a finite bicolored poset, then the sign-imbalance
of the set of all linearizations of $P$ \textup{(}relative to any
fixed indexing of $|P|)$ is equal to the sign-imbalance
\textup{(}relative to the same indexing\textup{)} of the set of
linearizations compatible with the given bicoloring.

In particular, if $P$ has no linearizations
compatible with the given bicoloring, then it is sign-balanced.\qed
\end{theorem}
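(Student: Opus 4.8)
The plan is to partition the set $L$ of all linearizations of $P$ into its block-equivalence classes, show that each class contributes either $0$ or $\pm 1$ to the sign-imbalance, and identify the classes contributing $\pm 1$ as exactly the singleton classes consisting of a single linearization compatible with the bicoloring. Summing over classes then gives the first assertion, and the second is the special case in which no such singleton class occurs.

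First I would fix once and for all an indexing $b\colon|P|\to\{1,\dots,n\}$, $n=\r{card}(|P|)$, relative to which all signs are computed. Given a block-equivalence class, with common prelinearization having block decomposition $|P|=|P|_1\cup\dots\cup|P|_m$, I would single out one ``base'' linearization $\leq^{(0)}$ in the class; by the discussion preceding the theorem, every linearization $\leq$ in the class is obtained from $\leq^{(0)}$ by independently re-ordering the elements inside each $|P|_i$, say by a permutation $\sigma_i$ of $|P|_i$, and this sets up a bijection between the class and the product of the symmetric groups on the $|P|_i$. The one computational step needing care is the claim that, because the blocks $|P|_i$ occupy disjoint consecutive intervals of ranks under $\leq^{(0)}$, the permutation of $\{1,\dots,n\}$ carrying $\leq^{(0)}$ to $\leq$ is a ``block-diagonal'' permutation whose sign is $\prod_i\r{sgn}(\sigma_i)$; hence the sign of $\leq$ relative to $b$ equals $\varepsilon\cdot\prod_i\r{sgn}(\sigma_i)$, where $\varepsilon=\pm 1$ is the sign of $\leq^{(0)}$ relative to $b$, a quantity fixed once the class and its base have been chosen.

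Next I would sum over the class: its contribution to the sign-imbalance is $\varepsilon\prod_i\bigl(\sum_{\sigma_i}\r{sgn}(\sigma_i)\bigr)$, where $\sigma_i$ ranges over the symmetric group on $|P|_i$. For $k\geq 2$ one has $\sum_{\sigma\in S_k}\r{sgn}(\sigma)=0$, since multiplication by a fixed transposition interchanges the even and odd permutations; so a class in which some $|P|_i$ has more than one element contributes $0$. A class in which every block is a singleton contains exactly one linearization, and, using the remark in the text that within a block all elements are $\pq$-incomparable, such a linearization is precisely one under which consecutive elements have opposite colors, i.e. one compatible with the bicoloring; conversely every compatible linearization is the unique member of such a class. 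Adding up the class contributions, the sign-imbalance of $L$ equals the sum of the signs of the compatible linearizations, giving the first assertion. If $P$ has no compatible linearization, then every class contains a block of size $\geq 2$ and hence contributes $0$, so the sign-imbalance is $0$; that is, $P$ is sign-balanced.

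I expect the main obstacle to be the sign bookkeeping in the middle step: pinning down precisely why re-ordering within disjoint consecutive blocks multiplies the global sign by the product of the block signs, and checking that the correspondence between all-singleton classes and compatible linearizations is a genuine bijection with each such class a singleton. Everything else reduces to the elementary identity $\sum_{\sigma\in S_k}\r{sgn}(\sigma)=0$ for $k\geq 2$ and to bookkeeping already carried out in the paragraphs preceding the theorem statement.
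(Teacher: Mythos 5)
Your proposal is correct and follows essentially the same route as the paper: the paper's argument (laid out in the paragraphs preceding the theorem) likewise partitions linearizations into block-equivalence classes, observes that the sign of a linearization is, up to a fixed $\pm 1$ per class, the product of the signs of the within-block orderings, and kills every class containing a block of size at least $2$ by the even/odd split of $S_k$ for $k\geq 2$. Your write-up merely makes the sign bookkeeping and the identification of singleton-block classes with compatible linearizations more explicit, which is fine.
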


Easy examples of bicolored posets $P$ with no
compatible linearizations are those in which the difference
between the numbers of elements of the two colors is greater than $1.$
If a poset $P$ has only one connected component, then a bicoloring
of $P,$ if one exists,
is unique up to interchange of colors; an example in which
this essentially unique bicoloring has the property just mentioned is
\raisebox{1.5pt}[7pt][7pt]{ 
\begin{picture}(16,12)
\put(7,7){\circle*{2}}
\dotline{0,-4}{7,7}
\dotline{7,-4}{7,7}
\dotline{14,-4}{7,7}
\end{picture}}\,,
with three elements of one color and just one of the other.
So that poset is sign-balanced.

If a bicolorable poset has more than one connected component,
we have a larger choice of bicolorings.
For instance, if $P$ consists of two connected components, each
of which is a
chain with an odd number of elements, and we bicolor those chains
so that their least elements have the same color, then we find
that the difference between the total numbers of elements of the two
colors is $2;$ so this poset, too, is sign-balanced.
If instead we color the two chains so that their least elements
have opposite colors, then $P$
has the same number of elements of each color;
so though we have just seen that it is sign-balanced, this second
bicoloring does not give a proof of that fact.

Examples of {\em non}-sign-balanced posets are
\raisebox{1.5pt}[7pt][7pt]{ 
\begin{picture}(12,12)
\put(0,1){\circle*{2}}
\put(8,6){\circle*{2}}
\dotline{8,-4}{8,6}
\end{picture}}
and
\raisebox{1.5pt}[7pt][7pt]{ 
\begin{picture}(10,12)
\put(0,6){\circle*{2}}
\dotline{0,-4}{0,6}
\put(8,6){\circle*{2}}
\dotline{8,-4}{8,6}
\end{picture}}\,;
as one can quickly check by counting.

Theorem~\ref{T.bicolor} allows us to give an alternative proof of
the following result of R.\,Stanley.
(The formulation of that result in~\cite{RPS05}
refers to the {\em length} of chains,
i.e., the number of {\em steps}, which is one less than the cardinality
of the chain, in terms of which I state it here.)

\begin{lemma}[{R.\,Stanley~\cite[Corollary~2.2]{RPS05}}]\label{L.RPS}
If a finite poset $P=(|P|,\pq)$ has the property
that the cardinalities of its maximal chains all have the same parity,
and this is the opposite of the parity of the
cardinality of $P,$ then $P$ is sign-balanced.
\end{lemma}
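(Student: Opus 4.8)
The plan is to deduce the lemma from the final assertion of Theorem~\ref{T.bicolor}: it suffices to produce a bicoloring of $P$ that admits no compatible linearization. We may assume $P$ is nonempty (otherwise the hypothesis about maximal chains is vacuous and there is nothing to prove); write $n=\r{card}(P)$, and let $\varepsilon\in\{0,1\}$ denote the common parity of the cardinalities of the maximal chains of $P$, so that the hypothesis asserts $n\not\equiv\varepsilon\pmod 2$.

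First I would construct the bicoloring. For $x\in|P|$, let $\rho(x)\in\{0,1\}$ be the parity of the cardinality of some saturated chain running upward from a $\pq$-minimal element of $P$ to $x$. The one step that needs real work --- the main obstacle --- is checking that $\rho$ is well defined, i.e.\ that any two such saturated chains (possibly having different minimal bottoms) have cardinalities of the same parity. To see this, I would prolong each of them upward by one and the same saturated chain from $x$ to a $\pq$-maximal element of $P$; the two resulting chains are maximal chains of $P$, hence by hypothesis have cardinalities of equal parity, and since they coincide from $x$ upward, their portions from the bottom up to and including $x$ also have cardinalities of equal parity. With $\rho$ well defined, note that if $y$ covers $x$ then appending $y$ to a chain witnessing $\rho(x)$ witnesses $\rho(y)$, so $\rho(y)\neq\rho(x)$; thus $\rho$ is a bicoloring in the sense of Definition~\ref{D.bicolor}. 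I would then record two observations: every $\pq$-minimal element $m$ has $\rho(m)=1$ (witnessed by the one-element chain $\{m\}$), and every $\pq$-maximal element $M$ has $\rho(M)=\varepsilon$ (being the top of some maximal chain of $P$, of cardinality $\equiv\varepsilon$).

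Next I would show that no linearization of $P$ is compatible with $\rho$. Suppose $\leq$ is such a linearization, and write the elements as $a_1,\dots,a_n$ with $a_{i+1}$ covering $a_i$ in $(|P|,\leq)$; the Hasse diagram of this chain is the path on $a_1,\dots,a_n$, and since $\rho$ is by assumption a bicoloring of $(|P|,\leq)$, it alternates along the path: $\rho(a_i)\equiv\rho(a_1)+(i-1)\pmod 2$. Because $\leq$ refines $\pq$, its least element $a_1$ is $\pq$-minimal and its greatest element $a_n$ is $\pq$-maximal; by the two observations above, $\rho(a_1)=1$ and $\rho(a_n)=\varepsilon$. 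Hence $\varepsilon=\rho(a_n)\equiv 1+(n-1)=n\pmod 2$, contradicting $n\not\equiv\varepsilon\pmod 2$. So $P$ has no linearization compatible with $\rho$, and Theorem~\ref{T.bicolor} yields that $P$ is sign-balanced. I expect everything past the well-definedness of $\rho$ to be routine bookkeeping.
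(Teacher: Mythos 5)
Your proposal is correct and follows essentially the same route as the paper: the same bicoloring by the parity of saturated chains below an element (with the same well-definedness argument via a common prolongation to a maximal chain), followed by the same top-versus-bottom parity contradiction showing no compatible linearization exists, so Theorem~\ref{T.bicolor} applies. Your arithmetic formulation $\varepsilon\equiv\rho(a_1)+(n-1)\pmod 2$ is just a more explicit rendering of the paper's ``same-color versus different-color'' comparison.
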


\begin{proof}
The fact that the cardinalities of all maximal chains of $P$ have the
same parity implies in particular that for each $x\in |P|,$ the
cardinalities of all maximal
chains in the downset generated by $x$ have a common parity.
(Otherwise, by combining two such chains of different
parities with a common maximal chain in the upset generated by $x,$
one would get two maximal chains in $P$ of different parities.)
Classifying elements $x$ according to whether maximal chains
below $x$ all have odd or even cardinality, we get a bicoloring
of $P,$ under which all minimal elements have a common color.
From this we see that if the common parity of the cardinalities
of all maximal chains of $P$ is odd, the maximal elements
of $P$ will all be of the same color as the minimal elements,
while if it is even, they will all have the other color.

Suppose now that $P$ has a linearization compatible with the
above bicoloring.
Because the parity of $P$ is the opposite of that of its
maximal chains, the relationship between the colors of the elements
at the bottom and top of this linearization (same-color
versus different-color) will be
the opposite of what is the case for maximal chains of $P.$

On the other hand, the bottom element of the linearization must be
the bottom element of one of those maximal chains, and hence must be of
the color which all those bottom elements have,
and the top element must similarly be
the color of the top elements of all the maximal chains,
so, in contradiction to the preceding paragraph, the
relationship between the colors of elements at
the top and bottom of this linearization must be
the same as for maximal chains of $P.$
This contradiction shows that
no compatible linearization of $P$ can exist; so
by Theorem~\ref{T.bicolor}, $P$ is sign-balanced.
\end{proof}

An easy example to which the above lemma applies is that in which
$P$ has two connected components, each a chain of odd cardinality (for
which we got the same conclusion in the second
paragraph after Theorem~\ref{T.bicolor}).
Two more easy examples are the posets
\raisebox{1.5pt}[7pt][7pt]{ 
\begin{picture}(16,12)
\put(7,7){\circle*{2}}
\dotline{0,0}{7,7}
\dotline{14,0}{7,7}
\end{picture}}
and
\raisebox{1.5pt}[7pt][7pt]{ 
\begin{picture}(16,12)
\dotline{0,0}{7,7}
\dotline{7,7}{14,0}
\dotline{14,0}{7,-7}
\dotline{7,-7}{0,0}
\end{picture}}\,.

\section{Bringing sign-imbalance and linearization-count together}\label{S.L_pm}
If $P$ is a finite poset, let $L_0(P)$ denote the number of
even linearizations of $P,$ and $L_1(P)$ the number of
odd linearizations (relative to some fixed linearization).
Then it is natural
to associate to $P$ the element $L(P) = L_0(P)+L_1(P)\,\zeta$
of the group ring $\Z\{1,\zeta\},$ where $\{1,\zeta\}$ is
the $\!2\!$-element group, written multiplicatively.

$\Z\{1,\zeta\}$ admits two ring homomorphisms
to $\Z,$ taking $\zeta$ to $+1$ and~$-1$ respectively; thus,
the former carries $L(P)$ to $L_0(P)+L_1(P),$ the number
of linearizations of $P,$ which we shall denote $L_+(P),$ while the
latter carries it to $L_0(P)-L_1(P),$ the sign-imbalance of $P,$
which we shall denote $L_-(P).$
These homomorphisms together yield an embedding of $\Z\{1,\zeta\}$
in $\Z\times\Z,$ whose image is
$\{(a,b)\in\Z\times\Z\mid a\equiv b\!\pmod{2}\}.$
The image of $L(P) = L_0(P)+L_1(P)\,\zeta$ under this embedding,
$(L_+(P),L_-(P)),$ will be denoted $L_\pm(P).$

For any finite poset $P,$ the element $L(P)$ lies in the
subsemiring $\N\{1,\zeta\}$ of $\Z\{1,\zeta\}$ consisting of elements
in which the coefficients of $1$ and $\zeta$ are both nonnegative.
The image of this semiring under the above ring embedding~is
\begin{equation}\begin{minipage}[c]{35pc}\label{d.bleq|a|}
$\{(a,b)\in\Z\times\Z\mid\,a\equiv b\hspace{-.5em}\pmod{2}$
and $|b|\leq a\}.$
\end{minipage}\end{equation}

It will follow from Theorem~\ref{T.L(*)} below that for
various finite posets $P$ and $P',$ we can say that the number
of linearizations $L_+(P)$ is a multiple of $L_+(P').$
(For instance, this is true if $P'$ is any of the
connected components of $P.)$
In such situations, it is not surprising to find that
the sign-imbalance $L_-(P)$ is, likewise, a multiple of $L_-(P').$
But in fact, one typically has the stronger statement that
$L(P)$ is a multiple of $L(P')$ in $\N\{1,\zeta\},$
equivalently, that the ordered pair $L_\pm(P)$
is a multiple of $L_\pm(P')$ in the semiring~\eqref{d.bleq|a|}.

So, for instance, if $L_\pm(P')=(4,2),$ then
$L_\pm(P)$ cannot be $(12,0)$ or $(8,2)$ or $(8,8).$
Separate consideration of the numbers $L_+(P)$
and $L_-(P)$ would not exclude these values;
but the pairs of integers by which $(4,2)$ would have to
be multiplied to get them would be $(3,0),$
$(2,1)$ and $(2,4),$ of which the first two don't have
coordinates of the same parity, while the last
fails to satisfy the inequality $|b|\leq a$ of~\eqref{d.bleq|a|}.
(The element $(2,4)\in\Z\times\Z$ corresponds to
$3-\zeta\notin\N\{1,\zeta\}.)$

We will, in the next three sections, consider $L_+$ and $L_-$ together.
We will encounter both parallelisms and contrasts between
their behaviors.

We remark that the element $L(P)\in\N\{1,\zeta\}$ defined above is the
image, under the homomorphism taking~$q$ to~$\zeta,$
of the polynomial $I_{P,\omega}(q)$ defined by Stanley in
\cite[p.881, display~(1)]{RPS05}.
That polynomial depends in a much stronger way than $L(P)$ on the
reference linearization (expressed there by an indexing $\omega$
of $|P|),$ and, as Stanley notes, does not seem easy to understand.

\Needspace{4\baselineskip}
\section{The linearization-count and sign-imbalance of a lexicographic sum}\label{S.lex}

\subsection{Review of lexicographic sums}\label{SS.lex}
Suppose $P_0=(|P_0|,\pq_0)$ is a poset, and that for each $x\in |P_0|$
we are given a poset $P_x=(|P_x|,\pq_x).$
Then we can form the disjoint union of the sets $|P_x|,$ say
constructed as $\{(x,x')\mid x\in |P_0|,\,x'\in |P_x|\},$ and give
this set a partial ordering under which, for each $x\in |P_0|,$ the
copy $\{(x,x')\mid x'\in |P_x|\}$ of $|P_x|$ is made order-isomorphic
to $P_x$ via the correspondence $(x,x')\mapsto x',$
while for $x\neq y$ in $|P_0|,$ the order-relation between
elements $(x,x')$ and $(y,y')$ $(x'\in |P_x|,$ $y'\in |P_y|)$
is determined solely by their first components:
$(x,x')\pq(y,y')$ if and only if $x\pq_0 y.$

For finite posets, considered
below, we shall find it convenient
to assume $|P_0|$ indexed as $\{x_1,\dots,x_{m_0}\}.$
We shall then abbreviate
$P_{x_i}=(|P_{x_i}|,\pq_{x_i})$ to $P_i=(|P_i|,\pq_i),$ and
assume that each $|P_i|$ is indexed as $\{x_{i,1},\dots,x_{i,m_i}\},$
with distinct symbols $x_{i,j}$
$(i\leq m_0,\,j\leq m_i)$ denoting distinct elements.
This makes the $|P_i|$ disjoint, so we can forego the construction
of their disjoint union by ordered pairs,
and simply take the underlying set of our lexicographic sum
to be their union.
Choosing a notation for such a lexicographic sum, and summarizing
the above description of it, we have
\begin{equation}\begin{minipage}[c]{35pc}\label{d.P_0*()}
$P_0*(P_1,\,\dots,\,P_{m_0})\ =\ (|P|,\,\pq)\ =\ P,$ where\\[.2em]
$|P|\ =\ \{x_{i,j}\mid 1\leq i\leq m_0,\ 1\leq j\leq m_i\},$ and\\[.2em]
$x_{i,j}\pq x_{i',j'}$ if and only if either\\[.1em]
\hspace*{1em}$i\neq i'$ and $x_i\pq_0 x_{i'}$ (in $P_0),$ or\\
\hspace*{1em}$i=i'$ and $x_{i,j}\pq_i x_{i,j'}$ (in $P_i).$
\end{minipage}\end{equation}

Two easy classes of examples:
A poset $P$ decomposed into its connected components $P_i$
can be regarded as the lexicographic sum of the $P_i$
over an antichain $P_0;$ in particular, the
next-to-last steps in the construction of section~\ref{S.ord+} were
lexicographic sums of chains over antichains.
On the other hand, the ``prelinearizations'' that occurred at the
next-to-last step in section~\ref{S.sign} were
lexicographic sums of antichains over chains.

In a lexicographic sum $P_0*(P_1,\,\dots,\,P_{m_0}),$ some or all
of the $P_i$ may be empty.
Occasionally, such cases will require special consideration.

Let us make

\begin{convention}\label{C.lin}
In the context of~\eqref{d.P_0*()}, for the purpose of defining the
parities of linearizations, we define on each $|P_i|$
$(0\leq i\leq m_0)$ the linear {\em reference
order} $x_{i,1}\pq_{i,\r{ch}}\dots\pq_{i,\r{ch}}x_{i,m_i}$
\textup{(}where $\r{ch}$ is mnemonic for ``chain''\textup{)},
and for each $i$ we shall write $P_{i,\r{ch}}= (|P_i|,\pq_\r{ch}).$

On $|P_0*(P_1,\,\dots,\,P_{m_0})|$ we likewise
define the reference order
\begin{equation}\begin{minipage}[c]{35pc}\label{d.pq_ch}
$x_{1,1}\pq_\r{ch}\dots\pq_\r{ch}x_{1,m_1}\pq_\r{ch}\ \dots
\ \pq_\r{ch} x_{m_0,1}\pq_\r{ch}\dots \pq_\r{ch}x_{m_0,m_{m_0}}.$
\end{minipage}\end{equation}
\end{convention}

\subsection{Computing $L$ of a lexicographic sum}\label{SS.L(*)}
Sections~\ref{S.ord+} and~\ref{S.sign} both used the idea,
``linearize the pieces, and see how you
can combine the resulting chains''.
This idea is abstracted in the following easy result.

\begin{theorem}\label{T.L(*)}
In the context of~\eqref{d.P_0*()}
and Convention~\ref{C.lin}, we have
\begin{equation}\begin{minipage}[c]{35pc}\label{d.L(*)}
$L(P_0*(P_1,\,\dots,\,P_{m_0}))\ =
\ \left( \prod_{1\leq i\leq m_0} L(P_i)\right)
\,\cdot\,L(P_0*(P_{1,\r{ch}},\dots,P_{m_0,\r{ch}})).$
\end{minipage}\end{equation}

\textup{(}Hence, the same is true with $L$ everywhere
replaced by $L_\pm,$ or by $L_+,$ or by $L_-.)$
\end{theorem}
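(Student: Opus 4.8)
The plan is to build an explicit bijection between the linearizations of $P:=P_0*(P_1,\dots,P_{m_0})$ and the tuples $(\beta_1,\dots,\beta_{m_0},\tau)$, where each $\beta_i$ is a linearization of $P_i$ and $\tau$ is a linearization of the lexicographic sum of chains $Q:=P_0*(P_{1,\r{ch}},\dots,P_{m_0,\r{ch}})$, to show that this bijection multiplies signs, and then to sum over linearizations. Throughout, write $N=\r{card}(|P|)$; regard a linearization of a poset $R$ on $N$ elements as an isotone bijection $\beta\colon|R|\to\{1,\dots,N\}$; and, with $\beta_{\r{ch}}$ the bijection attached to the reference order of $R$, let $\epsilon(\beta)\in\{0,1\}$ be the parity of the permutation $\beta\,\beta_{\r{ch}}^{-1}$ of $\{1,\dots,N\}$, so that $L(R)=\sum_\beta\zeta^{\epsilon(\beta)}$ in $\Z\{1,\zeta\}$. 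The theorem will then follow from the congruence $\epsilon(\beta)\equiv\epsilon(\beta_1)+\dots+\epsilon(\beta_{m_0})+\epsilon(\tau)\pmod 2$: since $\zeta^2=1$, this says $\zeta^{\epsilon(\beta)}=\big(\prod_i\zeta^{\epsilon(\beta_i)}\big)\zeta^{\epsilon(\tau)}$, and summing over the linearizations of $P$ while re-indexing the sum by the bijection turns it into $\big(\prod_i\sum_{\beta_i}\zeta^{\epsilon(\beta_i)}\big)\cdot\sum_\tau\zeta^{\epsilon(\tau)}=\big(\prod_i L(P_i)\big)\cdot L(Q)$, which is~\eqref{d.L(*)}. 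The parenthetical assertions then drop out, since $L_+$, $L_-$, and $L_\pm$ are images of $L$ under ring homomorphisms out of $\Z\{1,\zeta\}$, which therefore carry~\eqref{d.L(*)} to the analogous identities.

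First I would construct the bijection. Given a linearization $\beta$ of $P$: for each $i$, restricting $\beta$ to the block $|P_i|$ and composing with the order-isomorphism of its image onto $\{1,\dots,m_i\}$ gives a linearization $\beta_i$ of $P_i$ (isotone for $\pq_i$, which is a restriction of $\pq$); and letting $\tau$ send $x_{i,r}$ to the position occupied under $\beta$ by the $\!r\!$-th element of $|P_i|$ to appear defines a bijection $\tau\colon|P|\to\{1,\dots,N\}$. From the description~\eqref{d.P_0*()} of $\pq$ one checks that $\tau$ is isotone for the ordering of $Q$: within a block $\tau$ respects $\pq_{i,\r{ch}}$ by construction, and whenever $x_i\pq_0 x_{i'}$ every element of block $i$ precedes every element of block $i'$ under $\beta$, hence under $\tau$. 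Conversely, given $(\beta_1,\dots,\beta_{m_0},\tau)$, define $\beta$ by $\beta^{-1}(k)=\beta_i^{-1}(r)$, where $x_{i,r}=\tau^{-1}(k)$; \eqref{d.P_0*()} again shows $\beta$ is isotone for $\pq$, and the two constructions are visibly mutually inverse. (Empty blocks $P_i$ contribute nothing and cause no difficulty, $L$ of the empty poset being $1$.)

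The step I expect to be the real work is the sign multiplicativity. The key observation is that $\beta^{-1}=g\,\tau^{-1}$, where $g\colon|P|\to|P|$ is the permutation sending $x_{i,r}$ to $\beta_i^{-1}(r)$: this $g$ carries each block $|P_i|$ onto itself, and, read through the reference identification of $|P_i|$ with $\{1,\dots,m_i\}$, acts on that block as a permutation of parity $\epsilon(\beta_i)$. Writing $\beta=\tau\,g^{-1}$ and conjugating $\beta\,\beta_{\r{ch}}^{-1}$ appropriately --- using that, by Convention~\ref{C.lin}, the reference bijection $\beta_{\r{ch}}$ of $P$ is also that of $Q$ --- one gets $(-1)^{\epsilon(\beta)}=(-1)^{\epsilon(\tau)}\cdot\r{sgn}\big(\beta_{\r{ch}}\,g\,\beta_{\r{ch}}^{-1}\big)$. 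Here Convention~\ref{C.lin} enters decisively a second time: $\beta_{\r{ch}}$ sends each block $|P_i|$ onto the consecutive integers $m_1+\dots+m_{i-1}+1,\dots,m_1+\dots+m_i$ in the order $\pq_{i,\r{ch}}$, so $\beta_{\r{ch}}\,g\,\beta_{\r{ch}}^{-1}$ is block-diagonal and its sign equals $\prod_i(-1)^{\epsilon(\beta_i)}$, with no leftover factor. This gives $\epsilon(\beta)\equiv\sum_i\epsilon(\beta_i)+\epsilon(\tau)\pmod 2$, the congruence used above. The only delicate point here is bookkeeping with inverses and with the order of composition; conceptually the argument just says that linearizing the blocks and shuffling them are independent operations whose signs multiply, and the two reference-order compatibilities built into Convention~\ref{C.lin} are exactly what keeps a spurious sign from appearing.
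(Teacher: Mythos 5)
Your proposal is correct and follows essentially the same route as the paper's own (sketched) proof: decompose a linearization of the lexicographic sum into linearizations of the blocks plus a ``shuffle'' identified with a linearization of the corresponding sum of chains, and check that parities multiply. You have merely made explicit the bijection and the sign bookkeeping that the paper dismisses with ``it is also not hard to check,'' and your computation of that sign is right.
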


\begin{proof}[Sketch of proof]
To obtain the general linearization of the
poset $P_0*(P_1,\,\dots,\,P_{m_0})=(|P|,\pq),$
let us first choose the linear order to be used on each
$|P_i|,$ which can be any linearization of ${\pq_i},$
then specify how the union of the resulting chains is to be ordered.
By the definition of lexicographic sum, the order-relation
in $P_0*(P_1,\,\dots,\,P_{m_0})=(|P|,\pq)$
between any element of $P_i$ and any element of $P_j$ with $i\neq j,$
depends only on $i$ and $j;$ hence for each
way of linearizing the separate $P_i,$ the ways of linearizing
their union which are compatible with those linearizations
and with the ordering of $P_0*(P_1,\,\dots,\,P_{m_0})=(|P|,\pq)$
correspond to the ways of linearizing
a lexicographic sum over $P_0$ of a family of chains of the
corresponding lengths, of which
$P_0*(P_{1,\r{ch}},\,\dots,\,P_{m_0,\r{ch}})$ is one.
It is also not hard to check that the parity of a
linearization of $P_0*(P_1,\,\dots,\,P_{m_0})$
(relative to the reference-ordering $\pq_\r{ch})$
is the product of the parities of the induced linearizations
of the $P_i$ (relative to the reference-orderings $\pq_{i,\r{ch}})$
and that of the corresponding linearization
of $P_0*(P_{1,\r{ch}},\,\dots,\,P_{m_0,\r{ch}})$
(again relative to $\pq_\r{ch}).$
The equality~\eqref{d.L(*)} follows.

The final parenthetical assertion follows because the operator
$L_\pm$ is the composite of the operator $L$ with a
ring homomorphism $\Z\{1,\zeta\}\to \Z\times\Z,$ and $L_+,$
$L_-$ are in turn the composites of $L_\pm$ with the projection
homomorphisms $\Z\times\Z\to\Z.$
\end{proof}

\subsection{Results on lexicographic sums of chains}\label{SS.L(m...)}
For Theorem~\ref{T.L(*)}
to be useful, we need to be able to compute $L(P)$
for $P$ a lexicographic sum of {\em chains.}
We obtain results in this direction below.
Since in these results, the chains in question are not given as
listings of the elements of
other posets, we adjust our notation slightly.

\begin{definition}\label{D.ch}
Let $P_0=(|P_0|,\pq_0)$ be a finite poset,
with $|P_0|=\{x_1,\dots,x_{m_0}\}.$
Then for nonnegative integers $m_1,\dots,m_{m_0},$ we define
\begin{equation}\begin{minipage}[c]{35pc}\label{d.LPm}
$L(P_0;\,m_1,\dots,m_{m_0})\ =\ L(P_0*(C_1,\dots,C_{m_0}))$
\end{minipage}\end{equation}
where for $1\leq i\leq m_0,$ $C_i=(|C_i|,\pq_i)$ is a chain of $m_i$
elements $x_{i,1}\pq_i\dots\pq_i x_{i,m_i},$
these chains being understood to be pairwise disjoint,
and the parities of linearizations of their
lexicographic sum being taken relative to the reference
ordering~\eqref{d.pq_ch}

We will also use the notation corresponding to~\eqref{d.LPm} with
$L_0,$ $L_1,$ $L_+,$ $L_-$ and $L_\pm$ in place of $L.$
\end{definition}

A trivial case is

\begin{proposition}\label{P.chain}
Suppose, in Definition~\ref{D.ch},
that $P_0$ is a chain, $x_1\pq_0\dots\pq_0 x_{m_0}.$
Then for all $m_1,\dots,m_{m_0},$
$L(P_0;\,m_1,\dots,m_{m_0}) = 1.$
Equivalently,
$L_+(P_0;\,m_1,\dots,m_{m_0}) = L_-(P_0;\,m_1,\dots,m_{m_0}) = 1.$
\end{proposition}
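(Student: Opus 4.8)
The plan is to reduce the statement to the observation that a lexicographic sum of chains over a chain is again a chain, and that, with the reference orders of Convention~\ref{C.lin}, this chain is literally its own reference order; the count then drops out immediately.

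First I would unwind~\eqref{d.P_0*()} in the case at hand. Since $P_0$ is the chain $x_1\pq_0\dots\pq_0 x_{m_0}$, we have $x_i\pq_0 x_{i'}$ exactly when $i\leq i'$; and since each $C_i$ is the chain $x_{i,1}\pq_i\dots\pq_i x_{i,m_i}$, we have $x_{i,j}\pq_i x_{i,j'}$ exactly when $j\leq j'$. Substituting these into the defining condition for the order $\pq$ on $P_0*(C_1,\dots,C_{m_0})$ shows that $x_{i,j}\pq x_{i',j'}$ holds precisely when $i<i'$, or when $i=i'$ and $j\leq j'$; that is, $\pq$ is the lexicographic order on the index pairs $(i,j)$, which is a total order. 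Comparing with~\eqref{d.pq_ch}, this total order is exactly the reference order $\pq_\r{ch}$.

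It then remains only to count. A totally ordered finite set has a unique linearization, namely itself; and since here $\pq$ already coincides with the reference order $\pq_\r{ch}$, the corresponding isotone bijection onto $\{1,\dots,m_1+\dots+m_{m_0}\}$ is the ``identity'', hence even. Thus $L_0(P_0*(C_1,\dots,C_{m_0}))=1$ and $L_1(P_0*(C_1,\dots,C_{m_0}))=0$, so $L(P_0;\,m_1,\dots,m_{m_0})=1+0\cdot\zeta=1$ in $\N\{1,\zeta\}$. Applying the two ring homomorphisms $\Z\{1,\zeta\}\to\Z$ then yields $L_+(P_0;\,m_1,\dots,m_{m_0})=L_0+L_1=1$ and $L_-(P_0;\,m_1,\dots,m_{m_0})=L_0-L_1=1$, which is the asserted equivalent form.

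I do not expect a genuine obstacle here; the one point warranting a remark is the degenerate case in which some or all of the $m_i$ are $0$, so that certain $C_i$ — or even the whole sum — are empty. This is harmless: an empty poset is vacuously a chain with a unique (empty, even) linearization, and the computation above goes through verbatim. The content of the proposition is really just this bookkeeping check, carried out consistently with the reference orders of Convention~\ref{C.lin}.
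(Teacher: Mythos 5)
Your proposal is correct and follows essentially the same route as the paper's proof: observe that a lexicographic sum of chains over a chain is itself a chain coinciding with the reference order, so it has a unique linearization, which is even. Your extra remarks on the degenerate case of empty $C_i$ are harmless bookkeeping that the paper leaves implicit.
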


\begin{proof}
With $P_0$ and all of $C_1,\dots,C_{m_0}$ being chains, we see that
$P_0*(C_1,\dots,C_{m_0})$ is a chain, so it has just one linearization.
This is our reference ordering, so it has even parity, giving our
first conclusion, which is equivalent to the final equation.
\end{proof}

If $P_0$ is an antichain we can also get an exact result.
In the proofs of Proposition~\ref{P.antich} below, and
of Theorem~\ref{T.floor}, which will generalize the hard part of that
result, we shall, following~\cite{RPS05}, use
the imagery of ``dominoes''.
Namely, to partly or completely cover a poset $P$ with dominoes
means to distinguish certain {\em non-overlapping} pairs of
elements $(x,y)$ such that $y$ covers $x$
(i.e., $x$ and $y$ are distinct elements, such that
$x\pq y$ and there are no elements strictly between these).
If $(x,y)$ is a pair which we have so distinguished,
we will speak of there being a domino lying over $x$ and~$y.$

(Richard Stanley has pointed out
to me that Proposition~\ref{P.antich}
is essentially known: Parts~(i) and~(ii) can be obtained by
setting $q=-1$ in formula~(1.68) of
\cite{RPS:EC}, and calling on~(1.66) and~(1.87) thereof.)

\begin{proposition}\label{P.antich}
Suppose, in the context of Definition~\ref{D.ch},
that $P_0$ is an antichain.

Then the linearization count
$L_+(P_0;\,m_1,\dots,m_{m_0})$ is the multinomial coefficient
\begin{equation}\begin{minipage}[c]{35pc}\label{d.L+=multinom}
$(\sum_{1\leq i\leq m_0} m_i)\,!\,/\,\prod_{1\leq i\leq m_0} m_i !\,.$
\end{minipage}\end{equation}

On the other hand, the sign-imbalance
$L_-(P_0;\,m_1,\dots,m_{m_0})$ is described as follows:\\[.2em]
\textup{(i)}\, If at most one of $m_1,\dots,m_{m_0}$ is
odd, then $L_-(P_0;\,m_1,\dots,m_{m_0})$ is the multinomial coefficient
\begin{equation}\begin{minipage}[c]{35pc}\label{d.L-=multinom}
$(\sum_{1\leq i\leq m_0} \lfloor m_i/2\rfloor)\,!\,/\,
\prod_{1\leq i\leq m_0} \lfloor m_i/2\rfloor\,!\,.$
\end{minipage}\end{equation}

\noindent
\textup{(ii)} If more than one of $m_1,\dots,m_{m_0}$ are
odd, then $L_-(P_0;\,m_1,\dots,m_{m_0})=0.$
\end{proposition}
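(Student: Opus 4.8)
The plan is to dispose of the three assertions in turn, using Theorem~\ref{T.bicolor} throughout. The linearization count $L_+$ is immediate: since $P_0$ is an antichain, $P_0*(C_1,\dots,C_{m_0})$ is the disjoint union of the chains $C_i$, so a linearization is nothing but an interleaving of these $m_0$ chains respecting the order within each, and the number of such interleavings is the multinomial coefficient~\eqref{d.L+=multinom}. For part~(ii), I would bicolor $P=P_0*(C_1,\dots,C_{m_0})$ by $2\!$-coloring each $C_i$ alternately, arranging that whenever $m_i$ is odd the larger color class of $C_i$ receives color~$1$. Then color~$1$ outnumbers color~$2$ by exactly the number of odd $m_i$, which is $\geq 2$ by hypothesis; so no linearization is compatible with this bicoloring, and Theorem~\ref{T.bicolor} yields $L_-(P_0;m_1,\dots,m_{m_0})=0$.

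For part~(i) I would bicolor $P$ so that the bottom element of every $C_i$ has color~$1$; this is a legitimate bicoloring since the $C_i$ are the connected components of $P$, and now color~$1$ exceeds color~$2$ by at most one. The crux is to describe the linearizations compatible with this bicoloring, i.e.\ those whose output sequence has colors strictly alternating. I would read the output two positions at a time and verify, by bookkeeping the numbers of color-$1$ and color-$2$ elements placed so far from each chain, that after an even number of positions no chain has placed a color-$1$ element without the color-$2$ element immediately above it, whereas after an odd number exactly one chain is ``mid-domino''. Hence at each odd step one freely chooses a not-yet-exhausted chain and places its next color-$1$ element, after which the next step is forced to place the color-$2$ element directly above it in the same chain --- except that the top element of the (at most one) odd chain can, by the same count, be placed only at the very last position. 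Thus a compatible linearization is precisely a linear ordering of the multiset of ``domino moves'' in which $C_i$ contributes $\lfloor m_i/2\rfloor$ moves, together with the forced appending of the odd chain's leftover top element; the number of these is $(\sum_i\lfloor m_i/2\rfloor)!\,/\,\prod_i\lfloor m_i/2\rfloor!$, the quantity~\eqref{d.L-=multinom}.

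To finish part~(i) I would check that all compatible linearizations have sign $+1$ relative to the reference ordering~\eqref{d.pq_ch}. Swapping two adjacent domino moves in the above ordering swaps the contents of two adjacent length-$2$ blocks of output positions, a product of two transpositions and hence an even permutation; as any two orderings of the domino moves are linked by such swaps, all compatible linearizations share a common sign. To pin that sign down, consider the compatible linearization $\lambda_0$ that carries out all of $C_1$'s domino moves, then all of $C_2$'s, and so on, appending the odd chain's top element last: $\lambda_0$ differs from~\eqref{d.pq_ch} only by moving that single element from its reference position to the end, which is a cyclic permutation of length $1+\sum_{i>t}m_i$ where $C_t$ is the odd chain; since every $m_i$ with $i>t$ is even, this cycle is even, so $\r{sgn}(\lambda_0)=+1$ and $L_-(P_0;m_1,\dots,m_{m_0})$ equals~\eqref{d.L-=multinom}. (When all $m_i$ are even, $\lambda_0$ is the reference ordering itself.)

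The main obstacle is the middle paragraph: making the ``two positions at a time'' analysis rigorous --- in particular, the forcing of each even step by the preceding odd step, and the claim that a chain's placed prefix never strands a color-$1$ element except as the final output position. Both hinge on the observation that after an even number of output positions the color-$1$ and color-$2$ multiplicities placed from each individual chain agree, whereas after an odd number they differ for exactly one chain, namely the one just advanced --- which is what collapses a compatible linearization to a bare sequence of domino moves.
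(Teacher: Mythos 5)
Your proof is correct, but it reaches the sign-imbalance statements by a genuinely different route from the paper's. The paper does not invoke Theorem~\ref{T.bicolor} here at all: it places dominoes greedily from the bottom on \emph{every} linearization and cancels via a direct sign-reversing involution (flip the two elements under the highest domino covering an incomparable pair), after which the surviving ``distinguished'' linearizations are exactly your color-alternating ones, counted and shown even in much the way you do. Your version routes the cancellation through Theorem~\ref{T.bicolor} instead, which is a legitimate reuse of the machinery of section~\ref{S.sign}; the two mechanisms land on the same set of survivors, since a linearization of a disjoint union of alternately colored chains is compatible with the bicoloring precisely when the greedily placed dominoes all lie over same-chain pairs. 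Where your approach clearly buys something is part~(ii): by choosing the bicoloring per chain so that each odd chain's larger color class gets color~$1$, you get a color imbalance equal to the number of odd $m_i$, hence $\geq 2$, and sign-balance is immediate from the remark following Theorem~\ref{T.bicolor} --- shorter and more transparent than the paper's derivation of~(ii) from the domino analysis. The ``main obstacle'' you flag in the middle paragraph is in fact routine: after $2k$ positions the numbers of color-$1$ and color-$2$ elements placed agree globally, and each chain's placed prefix of the pattern $1,2,1,2,\dots$ has color-$1$ count $\geq$ color-$2$ count, so all per-chain prefixes must have even length; after $2k+1$ positions exactly one chain has an odd prefix, which forces the next step (or, if that chain is exhausted, forces termination and hence that the exhausted chain is the unique odd one). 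Your sign computation for $\lambda_0$ (an odd-length cycle, hence even) and the adjacent-domino-swap argument are both sound.
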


\begin{proof}
The number $L_+(P_0;\,m_1,\dots,m_{m_0})$
of linearizations of our lexicographic sum of chains
is the number of ways of partitioning
the $\sum_{1\leq i\leq m_0} m_i$ positions
available in such a linearization
into subsets of cardinalities $m_1,\dots,m_{m_0},$
and~\eqref{d.L+=multinom} is a standard description of this number
\cite[p.20]{RPS:EC}.

To get information on $L_-(P_0;\,m_1,\dots,m_{m_0}),$ let us cover
as much as we can of
each linearization of $P_0*(C_1,\dots,C_{m_0})$ with
dominoes, starting from the bottom.
Thus, if $\sum m_i$ is even, each linearization of our
lexicographic sum will be entirely
covered, while if it is odd, all but the top element of
each linearization will be covered.

I claim that the set of linearizations which, when dominoes are
so placed, have
the property that at least one domino lies over two elements that
are {\em incomparable} under $\pq$ is sign-balanced.
Indeed, let us pair off these linearizations as follows.
Given such a linearization, find, among dominoes that lie over
$\pq\!$-incomparable pairs, the top one, and form a
new linearization by reversing the positions of the pair of
elements it lies over.
Because those elements are $\pq\!$-incomparable, the result will
again be a linearization of $\pq;$ and because exactly
one pair has been interchanged, the new linear order has
parity opposite to the old one.
It is also clear that the above construction, applied to the new
linearization, returns the original one, so we indeed have a pairing.
Hence the contributions to the sign-imbalance of those linearizations
sum to zero.

Thus, to determine the sign-imbalance of $P_0*(C_1,\dots,C_{m_0}),$
it suffices to consider linearizations which, when dominoes are
set down as above,
have the property that every domino lies over a pair of
{\em comparable} elements.
Let us call such a linearization of $\pq$ {\em distinguished}.

Since elements from different chains $C_i$ are incomparable,
a distinguished linearization will have the property that
every domino lies over a pair of elements from the same $C_i.$
More precisely, we see by induction, working up from the bottom, that
every domino will lie over a pair of the form $(x_{i,2j-1},\,x_{i,2j}).$
From this it easily follows that any distinguished linearization can be
turned into our reference ordering~\eqref{d.pq_ch}
by repeatedly switching the places of two adjacent dominoes (hence,
reordering a string $w<x<y<z$ as $y<z<w<x),$
or moving a domino past the lone dominoless element, if there is one
(reordering a string $x<y<z$ as $z<x<y.)$
A movement of either sort
can be done using $2$ transpositions of elements,
so in each case, the parity of the linearization does not change.
Hence every distinguished linearization is even,
so the sign-imbalance of $P_0*(C_1,\dots,C_{m_0})$
equals the number of distinguished linearizations.

From the above considerations, we can see that if there exists
a distinguished linearization, at most one of the $C_i$
can have odd cardinality (the $C_i$ to which the ``lone element'',
if any, at the top of a distinguished linearization belongs).
This immediately gives statement~(ii).

To get statement~(i),
suppose first that all the $m_i$ are even.
Then it is not hard to see that each distinguished linearization is
determined by noting how the $(\sum m_i)/2$ dominoes
are partitioned into $m_1/2$ dominoes lying over pairs
of members of $C_1,$ $m_2/2$ lying over pairs of members of $C_2,$ etc..
By the same counting principle used in
getting~\eqref{d.L+=multinom}, the number of such partitions is
$(\sum m_i/2)\,!/\,\prod(m_i/2)\,!\,,$
which agrees with~\eqref{d.L-=multinom} in this case.

Now suppose instead that exactly one of the
$m_i,$ say $m_{i_0},$ is odd.
Then in a distinguished linearization,
the one element not under any domino,
namely, the top element of our linearization,
is necessarily the largest element of the chain $C_{i_0}.$
So a distinguished linearization will be determined by
the arrangement of the dominoes covering
the remaining elements, and as before, the number of
possibilities is described by~\eqref{d.L-=multinom}.
\end{proof}

For general $P_0,$ we do not have an exact formula
for $L_+(P_0;\,m_1,\dots,m_{m_0});$ but we can again get a result on
$L_-(P_0;\,m_1,\dots,m_{m_0}),$ in the spirit of
the above proposition.

\begin{theorem}\label{T.floor}
Given $P_0$ as in Definition~\ref{D.ch}, and
nonnegative integers $m_1,\dots,m_{m_0},$ let
\begin{equation}\begin{minipage}[c]{35pc}\label{d.S=}
$S\ =\ \{x_i\mid m_i$ is odd$\}\ \subseteq\ |P_0|.$
\end{minipage}\end{equation}
Then\\[.2em]
\textup{(i)}\, If $S$ forms a chain under $\pq_0$
\textup{(}possibly the empty chain\textup{)}, then we have
\begin{equation}\begin{minipage}[c]{35pc}\label{d.L-=}
$L_-(P_0;\,m_1,\dots,m_{m_0})\ =
\ \pm\,L_+(P_0;\lfloor m_1/2\rfloor,\dots,\lfloor m_{m_0}/2\rfloor),$
\end{minipage}\end{equation}
with the $+$ sign applying if the reference ordering of $P_0$
is isotone on $S.$\\[.2em]
\textup{(ii)}\, If, on the other hand, $S$
contains two elements $x_{i_0}$ and $x_{i_1}$ which are incomparable,
and which {\em are majorized by} exactly
the same set of other elements of $P_0,$ or alternatively, which
{\em majorize} exactly the same set of other elements of $P_0,$ then
\begin{equation}\begin{minipage}[c]{35pc}\label{d.L-=0}
$L_-(P_0;\,m_1,\dots,m_{m_0})\ =\ 0.$
\end{minipage}\end{equation}
\end{theorem}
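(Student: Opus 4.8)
The plan is to mimic the proof of Proposition~\ref{P.antich}, using the ``domino'' imagery, but with extra care about dominoes that cross between different chains $C_i$.

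\emph{Step 1: reduction to distinguished linearizations.} As in Proposition~\ref{P.antich}, I would cover each linearization $\lambda$ of $P=P_0*(C_1,\dots,C_{m_0})$ with dominoes from the bottom and pair off, with opposite signs, those $\lambda$ in which some domino lies over a $\pq$-incomparable pair (reverse the two elements under the topmost such domino; this is a sign-reversing involution, exactly as in Proposition~\ref{P.antich}). This shows that $L_-(P_0;m_1,\dots,m_{m_0})$ equals the signed count of the \emph{distinguished} linearizations --- those in which every domino lies over a $\pq$-comparable pair. (Such a pair is automatically $\pq$-covering: anything $\pq$-between the two elements would also lie $\lambda$-between them, contradicting $\lambda$-consecutiveness.) Everything after this point is the analysis of distinguished linearizations; this is the promised generalization of ``the hard part'' of Proposition~\ref{P.antich}, and it is where the real work lies.

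\emph{Step 2: peeling from the bottom.} Given a distinguished $\lambda$, I would examine its bottom domino. Its bottom element is $x_{i,1}$ for some $i$ with no nonempty chain strictly below it in $P_0$, and one checks that the partner of $x_{i,1}$ is either $x_{i,2}$ (a within-chain domino, possible only if $m_i\geq 2$) or else $x_{j,1}$ for some $j$ with $x_i\prec_0 x_j$, $m_i=1$, and $C_k$ empty for every $x_k$ strictly between $x_i$ and $x_j$ in $P_0$ (a cross-chain domino). Deleting this domino leaves a distinguished linearization of a strictly smaller lexicographic sum of chains over a subposet of $P_0$: in the first case $C_i$ loses its two bottom elements; in the second, $C_i$ is emptied entirely and $C_j$ loses its bottom element. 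So one gets an induction on $\sum m_i$, recording at each step how the sign of $\lambda$ relative to the reference order~\eqref{d.pq_ch} changes.

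\emph{Step 3, part (i).} Assuming $S$ of~\eqref{d.S=} is a chain $x_{s_1}\prec_0\cdots\prec_0 x_{s_r}$, I would use Step~2 to show that the distinguished linearizations of $P$ are in bijection with the linearizations of $P_0*(C'_1,\dots,C'_{m_0})$, where $C'_i$ is a chain of $\lfloor m_i/2\rfloor$ elements, and that they all carry one common sign. The bijection is the ``doubling'' map --- replace the $k$-th element of $C'_i$ by the pair $(x_{i,2k-1},x_{i,2k})$ --- corrected by threading the $r$ ``defect'' top-elements $x_{s_1,m_{s_1}},\dots,x_{s_r,m_{s_r}}$ (which form a chain in $P$ because $S$ is a chain in $P_0$) through the cross-chain dominoes provided by Step~2; the chain hypothesis on $S$ is exactly what makes this threading unambiguous. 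The sign is then computed at one explicit distinguished linearization: the doubling of a permutation of the halved sum is an \emph{even} permutation, while moving $S$ from its position in the reference order of $P_0$ to its $\pq_0$-order contributes the sign of that rearrangement, which is $+1$ precisely when the reference order of $P_0$ is isotone on $S$. This gives~\eqref{d.L-=}. For part~(ii), I would instead show that \emph{no} distinguished linearization exists. By up--down duality (which only changes $L_-$ by an overall sign) it suffices to treat the case where the incomparable $x_{i_0},x_{i_1}$ are majorized by exactly the same set $U$ of other elements of $P_0$. Since $m_{i_0}$ and $m_{i_1}$ are odd, an odd (hence nonzero) number of elements of each of $C_{i_0},C_{i_1}$ must be matched by cross-chain dominoes; since $x_{i_0},x_{i_1}$ are incomparable no domino joins $C_{i_0}$ to $C_{i_1}$, so the two ``defects'' cannot cancel each other and must instead be carried off, through the peeling of Step~2, into chains over $U$. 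A parity count --- the chains over the minimal elements of $U$ receive defects from both sides, yet each can absorb defects only two at a time while remaining matched --- then produces a contradiction, giving~\eqref{d.L-=0}.

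The main obstacle is Step~3(i): precisely cataloguing the cross-chain domino patterns that can occur when $S$ is a chain, proving that the resulting correspondence with linearizations of the halved lexicographic sum really is a bijection, and carrying out the sign bookkeeping so that it collapses to the single predicted sign $\pm$. Part~(ii) is technically lighter but rests on the same structural understanding of distinguished linearizations obtained in Steps~2--3.
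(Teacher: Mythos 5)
Your Step 1 is sound: because you pack the dominoes at the fixed positions $(1,2),(3,4),\dots$, reversing the pair under the topmost incomparable domino really is a sign-reversing involution (the domino positions do not move under the swap), so $L_-$ does equal the signed count of your distinguished linearizations. This is actually \emph{more} careful than the paper's own argument, which places dominoes by an adaptive rule (skipping $x_{i,j}$ when the next element lies in a chain above it in $P_0$) and then asserts that the swap reproduces the same domino placement --- an assertion that fails precisely because that rule consults the comparability of adjacent elements. But the bijection you assert in Step 3(i) --- distinguished linearizations of $P_0*(C_1,\dots,C_{m_0})$ versus linearizations of the halved sum, all carrying one common sign --- does not exist, and in fact the statement you are trying to prove is false. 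Take $P_0=\{x_1,x_2,x_3\}$ with the single nontrivial relation $x_1\pq_0 x_2$ and with $x_3$ incomparable to both, and $(m_1,m_2,m_3)=(1,1,2)$. Then $S=\{x_1,x_2\}$ is a chain on which the reference order is isotone, so~\eqref{d.L-=} predicts $L_-=+L_+(P_0;0,0,1)=1$; but $P_0*(C_1,C_2,C_3)$ is the disjoint union of two $2$-element chains, whose sign-imbalance is $2$ (Proposition~\ref{P.antich}, or the $(2,2)$ entry of the right-hand array of~\eqref{d.Pascal}). In your framework the two distinguished linearizations are $x_{1,1}<x_{2,1}<x_{3,1}<x_{3,2}$ and $x_{3,1}<x_{3,2}<x_{1,1}<x_{2,1}$, both even: the two ``defect'' elements $x_{1,1}$ and $x_{2,1}$ fuse into a single \emph{comparable} cross-chain domino, which then slides past the domino $(x_{3,1},x_{3,2})$ as a unit and produces more distinguished linearizations than the halved sum has. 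This is exactly the cross-chain phenomenon you flagged as ``the main obstacle,'' and it is fatal rather than technical. Part~(ii) fails as well: with the same $P_0$ and $(m_1,m_2,m_3)=(1,1,1)$, the elements $x_1,x_3$ are incomparable and majorize the same (empty) set of other elements, yet the resulting poset is a $2$-element chain plus an isolated point, with sign-imbalance $\pm1\neq0$.

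Since the same poset admits two lexicographic-sum presentations for which the theorem predicts different values ($2$ over the $2$-element antichain with $m=(2,2)$, versus $1$ over the $P_0$ above with $m=(1,1,2)$), no amount of sign bookkeeping in your Steps 2--3 can close the gap; the defect is in the statement, not merely in the threading argument. The constructive moral of your own Step 2 is that whenever $S$ contains a covering pair of $P_0$ (or, more generally, whenever defects can be handed off along comparabilities in $P_0$), comparable cross-chain dominoes survive the involution and contribute extra, movable units; the theorem is safe only when such dominoes cannot occur, e.g.\ when $P_0$ is an antichain (Proposition~\ref{P.antich}). If you want to salvage something, the right goal is a corrected formula in which the right-hand side of~\eqref{d.L-=} is replaced by a sum over the ways the odd ``defects'' can pair off along chains of $S$, rather than the single term $L_+(P_0;\lfloor m_1/2\rfloor,\dots,\lfloor m_{m_0}/2\rfloor)$.
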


\begin{proof}
Let us place dominoes on each linearization of
$P_0*(C_1,\dots,C_{m_0}),$ say with ordering $\leq,$ as follows,
working upward from the bottom.

Suppose inductively that we have specified where dominoes
are to be placed on our linearization up to a certain point,
but not all the way to the top.
Let $x_{i,j}$ be the least element such that we have not specified
whether a domino is to be placed on it.
If $x_{i,j}$ is the greatest element of our linearization, then
clearly we can put down no more dominoes; in particular,
$x_{i,j}$ will remain uncovered.
If, rather, $x_{i,j}$ is followed immediately either by another
element from same chain $C_i,$ or by an element which belongs
to a different chain
$C_{i'},$ and is $\!\pq\!$-incomparable with $x_{i,j},$
let us put a domino over this pair of elements.
Finally, if $x_{i,j}$ is followed in our linearization by an element
$x_{i',j'}$ with $i'\neq i$ and $x_i\pq_0 x_{i'},$
we put no domino over $x_{i,j}$ (so that our recursive construction
will continue with $x_{i',j'}$ in the role that $x_{i,j}$ had).

Having placed dominoes in this manner on every linearization of
$P_0*(C_1,\dots,C_{m_0}),$ we note that, as in the proof of
Proposition~\ref{P.antich}, if a linearization has a domino placed
over two incomparable elements, then looking at the highest
domino with this property, and reversing the order of the elements
under it, we get
another linearization, of the opposite parity, on which the
dominoes will have been placed over the same pairs of elements.
This again pairs off linearizations of opposite parities, hence we
can again ignore linearizations so paired in determining
$L_-(P_0;\,m_1,\dots,m_{m_0}).$

Let us again call those linearizations which are not so paired,
i.e., which involve dominoes only over pairs
$(x_{i,j},\,x_{i,j+1})$ of elements
of the same chain $C_i,$ ``distinguished''.

Note that the way we have placed dominoes on our linearizations
insures that if an element $x_{i,j}$ is not under a domino, then
it must be the largest element of the chain $C_i$ to
which it belongs, namely, $x_{i,m_i}.$
(Indeed, otherwise, the larger elements of $C_i$ would lie
above it, so it could not be the top element of the linearization; and
they would lie between it and the elements of any $C_{i'}$ with
$x_i\pq_0 x_{i'},$ so it would not get skipped in the process of
setting down dominoes.)
Clearly, also, if such an element is not the
largest element in the whole
linearization, then the element that follows it must have the form
$x_{i',1}.$
Finally, if our linearization is distinguished, then
all pairs of elements under dominoes must have the form
$(x_{i,j},\,x_{i,j+1});$ and for each $i,$ we again see by induction
that dominoes will cover precisely the pairs $(x_{i,2j-1},x_{i,2j})$
for $1\leq j\leq \lfloor m_i/2\rfloor.$
Thus, the top element $x_{i,m_i}$ of a chain $C_i$ will not lie
under a domino if and only if $m_i$ is odd.

Now in every distinguished linearization of
$P_0*(C_1,\dots,C_{m_0}),$
the $\sum_{1\leq i\leq m_0} \lfloor m_i/2\rfloor$
dominoes must be arranged in one of the
$L_+(P_0;\lfloor m_1/2\rfloor,\dots,\lfloor m_{m_0}/2\rfloor)$ ways
corresponding to linearizations of the lexicographic
sum over $P_0$ of a family of chains of lengths $\lfloor m_i/2\rfloor.$
I claim that if, as in the hypothesis of statement~(i), the set $S$
defined in~\eqref{d.S=} forms a chain in $P_0,$ then
(a)~each of the above arrangements of
dominoes appears in one and only one distinguished
linearization of $P_0*(C_1,\dots,C_{m_0}),$ and
(b)~if the reference ordering of $P_0$ is isotone on $S,$
then all distinguished linearizations have even parity.
This will prove~(i).

To see assertion~(a), assume we are given a linearization of
the indicated family of dominoes, and let us see why there is a
unique way of placing the ``lone'' elements $x_{i,m_i}$
$(x_i\in S)$ among them so as to get a distinguished linearization
of $P.$
Note that since $S$ is a chain in $P_0,$ the {\em relative} ordering
of those lone elements is predetermined.

We first describe the unique position at which the {\em highest}
lone element $x_{i,m_i}$ can appear in such a linearization:
If there is no $x_{i'}$ in $P_0$ with $x_i\pq_0 x_{i'},$
then by our description of distinguished linearizations,
$x_{i,m_i}$ can only appear at the very top; while if there are
such elements $x_{i'},$ they will necessarily belong
to $|P_0| - S,$ and $x_{i,m_i}$ will necessarily appear
just below the element $x_{i',1}$ with $x_i\pq_0 x_{i'}$
that appears lowest among such elements in our given
domino-covered chain.

We now repeat this idea, working down the chain of
elements $x_i\in S:$ for each $x_i,$ we place
$x_{i,m_i}$ just below the lowest element $x_{i',1}$
such that $x_i\pq_0 x_{i'}$ that we have already positioned.
(This $x_{i',1}$ may or may not lie under a domino.
It will not if $x_{i'}\in S$ and $m_{i'}$ happens to be $1;$
but in this case, by the order in which we have chosen to do things,
it will be an element whose position we have already determined.)
It is easy to see that the resulting ordering is indeed
a linearization of $P_0*(C_1,\dots,C_{m_0}),$ in fact
a distinguished linearization, and is the only distinguished
linearization compatible with the given arrangement of dominoes.

Assuming now that the reference ordering of $P_0$ is
isotone on $S,$ we see that each of the distinguished linearizations
described above can be obtained from the reference linearization
of $P_0*(C_1,\dots,C_{m_0})$
by a series of steps, each of which either moves one domino past
another, or moves a domino past a lone element.
(By our assumption that the reference ordering of $P_0$ is
isotone on $S,$ we never have to move one lone element past another.)
Again, each of these
steps acts by an even permutation, so our distinguished orderings
indeed have even parity, proving~(b), and hence proving~(i) in
the case where the reference ordering is isotone on $S.$
In the contrary case, a change in the reference ordering will
multiply $L_-(P_0;\,m_1,\dots,m_{m_0})$ by $\pm 1,$ giving
the general case of~(i).

Turning to~(ii), suppose first that $S$
contains incomparable elements $x_{i_0}$ and $x_{i_1}$ which
are majorized by the same sets of other elements of $P_0.$
Then in a distinguished linearization of
$P_0*(C_1,\dots,C_{m_0}),$ the descriptions of where the
elements $x_{i_0,m_{i_0}}$ and $x_{i_1,m_{i_1}}$ must
occur are the same -- at the top if the set of elements
$x_i\in |P_0|$ strictly greater than $x_{i_0}$
(equivalently, strictly greater than $x_{i_1})$
is empty; otherwise, immediately below the lowest element
$x_{i,1}$ of the linearization such that $x_i$ which is
strictly greater than these elements.
But $x_{i_0,m_{i_0}}$ and $x_{i_1,m_{i_1}}$ cannot both be
in that position, so $P_0*(C_1,\dots,C_{m_0})$ has {\em no}
distinguished linearizations, and~\eqref{d.L-=0} follows.

If, rather, $S$ has incomparable elements $x_{i_0}$ and $x_{i_1}$
which {\em majorize} the same set of other elements of $P_0,$
the result follows from the above case by the
invariance of the property of sign-balance under
reversal of order.
This completes the proof of~(ii).

(Since our rule for placing dominoes, and hence
our concept of distinguished linearization, are
{\em not} invariant under reversal of order,
we can't claim in the last case
that $P_0*(C_1,\dots,C_{m_0})$ has no distinguished linearizations.
Rather, a direct proof of~\eqref{d.L-=0} for that case
would involve defining ``reverse-distinguished''
linearizations, and showing that
$P_0*(C_1,\dots,C_{m_0})$ has none of these.)
\end{proof}

If we apply the above theorem to the case where $P_0$ is an
antichain, then every subset $S\subseteq |P_0|$ clearly
falls under one of cases~(i) or~(ii) above,
depending on whether it has $\leq 1$ or $>1$ elements,
and we get the description
of $L_-(P_0;\,m_1,\dots,m_{m_0})$ in Proposition~\ref{P.antich}.
The simplest examples of posets $P_0$
for which not every subset $S$ is covered
by our theorem are the $\!4\!$-element posets
\raisebox{1.5pt}[7pt][7pt]{ 
\begin{picture}(10,12)
\put(0,6){\circle*{2}}
\dotline{0,-4}{0,6}
\put(8,6){\circle*{2}}
\dotline{8,-4}{8,6}
\end{picture}}
and
\raisebox{0.5pt}[7pt][5pt]{ 
\begin{picture}(15,12)
\dotline{0,-3}{5,8}
\dotline{5,8}{10,-3}
\dotline{10,-3}{15,8}
\put(15,8){\circle*{2}}
\end{picture}},
with $S$ consisting, in each case, of the lower left and
upper right elements.
In the former case,
the function $L_\pm(P_0;\,m_1,m_2,m_3,m_4)$ is nevertheless
easily evaluated.
The second case looks harder, but I have not examined it closely.

\section{Some properties of $L_\pm(P_0;\,m_1,\dots,m_{m_0})$ as a function of $m_0,\dots,m_{m_0}$}\label{S.lex+}

\subsection{Motivation: the case where $P_0$ is a $\!2\!$-element antichain}\label{SS.Pascal}
Let us consider the simplest nontrivial case
of Proposition~\ref{P.antich}, where $m_0=2.$
Then $P_0*(C_1,C_2)$ is the disconnected union of a chain
of $m_1$ elements and a chain of $m_2$ elements, so its
linearizations correspond to the ways
of partitioning a chain of $m_1+m_2$ elements into two sets,
of $m_1$ and $m_2$ elements respectively.
The number of these is the binomial
coefficient $\binom{\,m_1+m_2}{\ m_2},$ so the values of
$L_+(P_0;\,m_1,m_2)$ are the entries of Pascal's triangle.
We display in~\eqref{d.Pascal} the first few rows of that triangle,
and likewise,
the values of $L_-(P_0;\,m_1,m_2)$ given by that same proposition.
In each array in~\eqref{d.Pascal}, the rows correspond to the
values of $m_1+m_2,$ the diagonals going downward to the
left to the values of $m_1,$
and the diagonals going downward to the right to the values of~$m_2.$

\begin{equation}\begin{minipage}[c]{35pc}\label{d.Pascal}
\BB{$L_+(P_0;\,m_1,m_2)$}\BB{$L_-(P_0;\,m_1,m_2)$}\\[.4em]
\BB{\B{1}}%
\BB{\B{1}}\\
\BB{\B{1}\B{1}}%
\BB{\B{1}\B{1}}\\
\BB{\B{1}\B{2}\B{1}}%
\BB{\B{1}\B{0}\B{1}}\\
\BB{\B{1}\B{3}\B{3}\B{1}}%
\BB{\B{1}\B{1}\B{1}\B{1}}\\
\BB{\B{1}\B{4}\B{6}\B{4}\B{1}}%
\BB{\B{1}\B{0}\B{2}\B{0}\B{1}}\\
\BB{\B{1}\B{5}\B{10}\B{10}\B{5}\B{1}}%
\BB{\B{1}\B{1}\B{2}\B{2}\B{1}\B{1}}\\
\BB{\B{1}\B{6}\B{15}\B{20}\B{15}\B{6}\B{1}}%
\BB{\B{1}\B{0}\B{3}\B{0}\B{3}\B{0}\B{1}}\\
\BB{\B{1}\B{7}\B{21}\B{35}\B{35}\B{21}\B{7}\B{1}}%
\BB{\B{1}\B{1}\B{3}\B{3}\B{3}\B{3}\B{1}\B{1}}\\
\BB{\B{1}\B{8}\B{28}\B{56}\B{70}\B{56}\B{28}\B{8}\B{1}}%
\BB{\B{1}\B{0}\B{4}\B{0}\B{6}\B{0}\B{4}\B{0}\B{1}}
\end{minipage}\end{equation}
\vspace{.5em}

The familiar rule for producing Pascal's triangle, that each
entry is the sum of the two above it in the preceding row,
can be interpreted in terms of $L_+(P_0;\,m_1,m_2):$
If we classify linearizations of $P_0*(C_1,C_2)$
according to whether the top element belongs
to $C_1$ or $C_2,$ then in the former case,
the ordering of the remaining elements constitutes
a linearization of the union of a chain of $m_1-1$
elements and a chain of $m_2$ elements,
in the latter case, a linearization of a union of a chain of $m_1$
elements and a chain of $m_2-1$ elements.

The sign-imbalances $L_-(P_0;\,m_1,m_2)$ turn out
to satisfy a similar law:
every entry that lies an {\em even} number of
steps from the left-sloping
edge of the array is, as in Pascal's triangle, the sum of the two
entries above it, while if an entry lies an odd number of steps from
that edge, it is the {\em difference} of those two entries
(the one to the left minus the one to the right).
The reader can justify this rule by classifying linearizations
of our union of chains as in the preceding paragraph, and
examining how the parity of a linearization compares with the
parity of the linearization of the one-element-smaller
poset that we get on dropping the element at the top
of that linearization.

(In fact, I discovered the formulas of
Proposition~\ref{P.antich}(i)-(ii) by first studying the case
$m_0=2,$ and obtaining the above recursive
rule for $L_-(P_0;\,m_1,m_2),$ then noticing
the way values from Pascal's triangle occurred in the resulting array,
and thinking about how to justify that pattern.)

\subsection{$L_\pm(P_0;\,m_1,\dots,m_{m_0})$ and one-variable polynomials}\label{SS.poly}

In general, $L_\pm(P_0;\,m_1,\dots,m_{m_0})$ is not
a polynomial function of $m_1,\dots,m_{m_0}.$
For instance, if $P_0$ is a $\!2\!$-element antichain
as in the preceding subsection, we have
$L_+(P_0;m,m) = \binom{2m}{m}\geq 2^m,$
so it grows too fast to be polynomial.
But note also that if, in
$L_+(P_0;\,m_1,m_2) = \binom{\,m_1+m_2}{\ m_1},$
we hold $m_2$ constant, the resulting function
is a polynomial in $m_1$ (of degree $m_2),$ while if
we hold $m_1$ constant, it is a polynomial in $m_2$ (of degree $m_1).$
Here is a similar result about $L_+$ for general $P_0,$
and a somewhat more complicated statement about $L_-.$

\begin{proposition}\label{P.poly}
Let $P_0=(|P_0|,\pq_0)$ be a finite poset, with $|P_0|=
\{x_1,\dots,x_{m_0}\},$ let $i_0\in\{1,\dots,m_0\},$
and for each $i\neq i_0$ in $\{1,\dots,m_0\},$ let
us fix a value for $m_i.$
Then, regarding $L_\pm(P_0;\,m_1,\dots,m_{m_0})$ as a
function of $m_{i_0},$ we have\\[0.4em]
\textup{(a)} $L_+(P_0;\,m_1,\dots,m_{m_0})$ is
a polynomial in $m_{i_0},$ whose degree is the sum of those
values of $m_i$ $(i\neq i_0)$ such that $x_i$ is $\pq_0\!$-incomparable
with $x_{i_0}.$\\[0.4em]
\textup{(b)} The values of $L_-(P_0;\,m_1,\dots,m_{m_0})$ are given by
two polynomials in $m_{i_0},$ one for
$m_{i_0}$ even, the other for $m_{i_0}$ odd.
Each has degree less than or equal to
$\lfloor \sum m_i/2\rfloor,$ where the summation is again over those
$i$ such that $x_i$ is $\pq_0\!$-incomparable with $x_{i_0}.$
\end{proposition}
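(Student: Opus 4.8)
The plan is to reduce both parts to Theorem~\ref{T.L(*)}, which lets us factor out the contribution of the chain $C_{i_0}$ from the lexicographic sum. Write $m'_i = \lfloor m_i/2\rfloor$ where convenient, and split the analysis of $m_{i_0}$ into the cases $m_{i_0}$ even and $m_{i_0}$ odd. Since the chain $C_{i_0}$ in $P_0 * (C_1,\dots,C_{m_0})$ is itself a poset with $L(C_{i_0}) = 1$ (it is linearly ordered, so it has a unique linearization, which is the reference one), Theorem~\ref{T.L(*)} does not directly help; instead I would peel off $C_{i_0}$ by the domino/counting method used in Proposition~\ref{P.antich} and Theorem~\ref{T.floor}, or — more cleanly for part~(a) — by a direct inductive count on $m_{i_0}$, classifying linearizations according to which chain contributes the top element, exactly as in the Pascal's-triangle discussion of Section~\ref{SS.Pascal}.

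For part~(a): fix all $m_i$ with $i \neq i_0$ and induct on $m_{i_0}$. Classifying the linearizations of $P_0 * (C_1,\dots,C_{m_0})$ by the chain $C_j$ containing the top element, we get that $L_+(P_0;\,m_1,\dots,m_{m_0})$ is a sum, over those $j$ for which $x_j$ is $\pq_0$-maximal in the appropriate sense, of terms $L_+$ of the lexicographic sum with $m_j$ decreased by one. The standard way to make this precise and get the exact degree is: the top slot of a linearization can be filled by $x_{j,m_j}$ for any $x_j$ maximal in $P_0$; when $x_{i_0}$ itself is maximal, decreasing $m_{i_0}$ drops the degree by $1$, and when it is not, the recursion eventually exposes it. Rather than track this delicately, I would instead argue directly: a linearization of $P_0 * (C_1,\dots,C_{m_0})$ is determined by choosing, for each $x_i$ $\pq_0$-incomparable with $x_{i_0}$, how its $m_i$ elements interleave with the $m_{i_0}$ elements of $C_{i_0}$ (the elements $x_i$ comparable to $x_{i_0}$ impose no interleaving freedom relative to $C_{i_0}$, being forced entirely above or below); carrying this out shows $L_+$ is, for fixed other data, a product/sum of binomial-type expressions $\binom{m_{i_0} + (\text{stuff})}{m_{i_0}}$-shaped quantities, hence polynomial in $m_{i_0}$ of degree $\sum_{i : x_i \,\parallel\, x_{i_0}} m_i$. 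The clean bookkeeping here is the main content, and I would present it as: condition on the linearization of $P_0 * (C_1,\dots,\widehat{C_{i_0}},\dots,C_{m_0})$ restricted to the elements not in $C_{i_0}$, then count the ways to insert $C_{i_0}$ as a function of $m_{i_0}$, which is a polynomial of the stated degree by the hockey-stick/Vandermonde identity.

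For part~(b): by Theorem~\ref{T.floor}, $L_-(P_0;\,m_1,\dots,m_{m_0})$ is zero unless $S = \{x_i : m_i \text{ odd}\}$ forms a chain, and when it does it equals $\pm L_+(P_0;\,\lfloor m_1/2\rfloor,\dots,\lfloor m_{m_0}/2\rfloor)$. Fixing the parities of all $m_i$, $i\neq i_0$, and fixing the parity of $m_{i_0}$, the set $S$ has fixed membership; so either $S$ is a chain for all allowed $m_{i_0}$ of that parity (and then $L_- = \pm L_+(P_0;\,\lfloor m_1/2\rfloor,\dots)$ with a sign depending only on the parity pattern, which is fixed), or $S$ fails to be a chain and $L_- \equiv 0$. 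In the nonzero case, apply part~(a) to the poset $P_0$ with the arguments $\lfloor m_i/2\rfloor$: as $m_{i_0}$ runs over integers of fixed parity, $\lfloor m_{i_0}/2\rfloor$ runs over all sufficiently large integers (an affine function of $m_{i_0}$ with slope $1/2$... i.e. it takes each value once as $m_{i_0}$ increases by $2$), so $L_+(P_0;\,\dots,\lfloor m_{i_0}/2\rfloor,\dots)$ is a polynomial in $\lfloor m_{i_0}/2\rfloor$ of degree $\sum_{i : x_i \,\parallel\, x_{i_0}} \lfloor m_i/2\rfloor$, hence a polynomial in $m_{i_0}$ of the same degree. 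Since $\sum \lfloor m_i/2\rfloor \le \lfloor \sum m_i/2\rfloor$, the stated degree bound follows; the bound is an inequality rather than an equality precisely because the floors of the non-$i_0$ arguments are now fixed constants, while the floor of the $i_0$-argument is the variable.

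**Main obstacle.** The real work is the exact degree computation in part~(a): showing not just that $L_+$ is polynomial in $m_{i_0}$ but that its degree is exactly $\sum_{i : x_i \,\parallel\, x_{i_0}} m_i$. Polynomiality is a soft consequence of the recursion (finitely many steps, each preserving polynomiality), but pinning the leading term requires checking it does not vanish — I would do this by exhibiting the top-degree contribution combinatorially (e.g. the dominant term comes from linearizations in which $C_{i_0}$ is interleaved as freely as possible with exactly the incomparable chains, and one counts these to see the leading coefficient is a positive multinomial coefficient), or by a generating-function argument. Everything in part~(b) is then a bookkeeping reduction to part~(a) via Theorem~\ref{T.floor}, with no new difficulty beyond keeping the two parity cases straight.
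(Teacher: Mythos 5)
Your part~(a) ends up at the paper's own argument: condition on a linearization of the complement $|P|-|C_{i_0}|$, observe that the elements of $C_{i_0}$ must be inserted into an interval containing some number $d\leq\sum_{i\in I_\sim}m_i$ of elements from chains indexed by elements incomparable with $x_{i_0}$, count the insertions as $\binom{m_{i_0}+d}{d}$ (a polynomial of degree $d$ in $m_{i_0}$), and note that the maximal $d$ is realized and that the leading coefficients, being positive, cannot cancel. That is exactly what the paper does, and your sketch, though loosely worded, contains all of these ingredients.

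Part~(b), however, has a genuine gap. You assert that ``by Theorem~\ref{T.floor}, $L_-(P_0;\,m_1,\dots,m_{m_0})$ is zero unless $S=\{x_i: m_i \text{ odd}\}$ forms a chain.'' Theorem~\ref{T.floor} does not say this. Its case~(ii) yields $L_-=0$ only when $S$ contains two incomparable elements that are majorized by exactly the same set of other elements of $P_0$ (or dually, majorize the same set); the paper explicitly points out, immediately after that theorem, that there are posets $P_0$ and subsets $S$ covered by neither case~(i) nor case~(ii) (e.g.\ the four-element zigzag with $S$ the lower-left and upper-right elements). For such $S$ your argument gives no information about $L_-$ at all, so the dichotomy on which your entire proof of~(b) rests is unproved (and is not established anywhere in the paper). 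The paper's proof of~(b) sidesteps this by reusing the insertion decomposition from~(a): for each linearization of $|P|-|C_{i_0}|$, the \emph{signed} count of ways to shuffle $C_{i_0}$ into the $d$ relevant slots is the $m_0=2$ sign-imbalance of Proposition~\ref{P.antich} (the right-hand array of~\eqref{d.Pascal}), which is given by one polynomial of degree $\leq d/2$ on even $m_{i_0}$ and another on odd $m_{i_0}$; summing these, with signs, over the finitely many linearizations of the complement gives the two polynomials and the degree bound $\lfloor\sum_{i\in I_\sim}m_i/2\rfloor$ with no hypothesis on $S$ whatsoever. If you want to salvage your route, you would have to prove separately that $L_-$ vanishes for every non-chain $S$, which is a stronger statement than anything in the paper; the direct signed-shuffle argument is the intended fix.
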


\begin{proof}
Again let $C_1,\dots,C_{m_0}$ be disjoint
chains of lengths $m_1,\dots,m_{m_0}.$
Then a linearization of $P=P_0*(C_1,\dots,C_{m_0})$ can
be determined by first choosing an arbitrary linearization of
the subposet with underlying set $|P|-|C_{i_0}|,$
then specifying where to insert the elements of $|C_{i_0}|.$

To see where those elements can go, let us
partition $\{1,\dots,m_0\}-\{i_0\}$ into three subsets:
the set $I_<$ of those $i$ such that $x_i\pq_0 x_{i_0},$
the set $I_>$ of those $i$ such that $x_{i_0}\pq_0 x_i,$ and
the set $I_\sim$ of those $i$ such that $x_i$ is
$\pq_0\!$-incomparable with $x_{i_0}.$
Then given any linearization $\leq$ of $\pq$ on $|P|-|C_{i_0}|,$ the
elements of $|C_{i_0}|$ can be inserted within the range bounded
below by the highest location, under that linearization,
of an element of $\bigcup_{i\in I_<} |C_i|,$
and above by the lowest location
of an element of $\bigcup_{i\in I_>} |C_i|;$
where we understand the former restriction to be vacuous
if $\bigcup_{i\in I_<} |C_i|$ is empty,
and the latter if $\bigcup_{i\in I_>} |C_i|$ is empty.

In our linearization of $|P|-|C_{i_0}|,$
the interval we have described will be populated by some subset
(possibly empty) of $\bigcup_{i\in I_\sim} |C_i|.$
If $d$ is the number of elements of that set in that interval, then
clearly
\begin{equation}\begin{minipage}[c]{35pc}\label{d.leq_d_leq}
$0\ \leq\ d\ \leq\ \sum_{i\in I_\sim} m_i,$
\end{minipage}\end{equation}
and we see that the number of ways the $m_{i_0}$ elements
of the chain $C_{i_0}$ can be interspersed among those $d$ elements
is $\binom{\,m_{i_0}+d}{\!d},$ which, as a function
of $m_{i_0},$ is a polynomial of degree $d.$
The maximum value allowed by~\eqref{d.leq_d_leq},
$d=\sum_{i\in I_\sim} m_i,$
does in fact occur, since we can linearize $|P|-|C_{i_0}|$ so that
all members of $\bigcup_{i\in I_<} |C_i|$ precede
all members of $\bigcup_{i\in I_\sim} |C_i|,$ and these precede
all members of $\bigcup_{i\in I_>} |C_i|.$
Summing the polynomials in $m_{i_0}$ obtained from all our
linearizations of $|P|-|C_{i_0}|,$ we get a polynomial $f(m_{i_0})$
describing $L_+(P_0;\,m_1,\dots,m_{d}).$
Since the leading coefficients of the polynomials we have summed
are all positive, the terms
of degree $\sum_{i\in I_\sim} m_i$ cannot cancel, so that is the
degree of $f(m_{i_0}),$ completing the proof of~(a).

If, instead, we look at the sign-imbalance, then the
polynomials $\binom{\,m_{i_0}+d}{\!d}$ are replaced by the
functions described in parts~(i) and~(ii) of
Proposition~\ref{P.antich} for $m_0=2$ (the
diagonals of the right-hand array of~\eqref{d.Pascal}),
multiplied by $\pm 1$ depending on the details of
our linearization of $|P|-|C_{i_0}|.$
Each of these is given by one polynomial of degree $\leq d/2$
(possibly the zero polynomial)
on odd inputs and another polynomial of degree $\leq d/2$
on even inputs, yielding~(b).
Because of the varying signs, we cannot say in this
case that the leading terms of the highest-degree polynomials
will not cancel, hence we cannot specify the exact degrees
of the polynomials we get.
\end{proof}

Comparing with Proposition~\ref{P.antich}, we might wonder whether,
of the two polynomials referred to in~(b) above, the
one that gives the sign-imbalance for $m_{i_0}$ odd must
have degree less than or equal to that
of the one that does so for $m_{i_0}$ even.
But this is not the case.
For instance, suppose $P_0$ is the poset
\raisebox{0.5pt}[7pt][6pt]{ 
\begin{picture}(35,12)
\put(12,2){\circle*{2}}
\dotline{20,-3}{20,8}
\put(20,8){\circle*{2}}
\put(0,0){$x_1$}
\put(21,-5){$x_2$}
\put(21,5){$x_3$}
\end{picture}},
let $i_0=2,$ and take $m_1$ and $m_3$ odd.
Then for any value of $m_2,$ the union of the posets $C_2$ and $C_3$
will be a chain of length $m_2+m_3,$ which has the
opposite of the parity of $m_2,$ whence we see
from Proposition~\ref{P.antich} that for $m_2$ even,
$L_-(P_0;\,m_1,m_2,m_3)$ will be zero, while for $m_2$
odd, it will be a polynomial of degree $\lfloor m_1/2\rfloor.$

\subsection{Chains in $P_0$ again}\label{SS.chains}
At the beginning of the preceding subsection, we noted that
for $P_0$ a $\!2\!$-element antichain, the function $L_+(P_0;\,m_1,m_2)$
was not a polynomial in its two variables.
More generally, if $x_{i_1}$ and $x_{i_2}$ are incomparable
elements of a finite poset $P_0,$ and we fix values for all
$m_i$ other than $m_{i_1}$ and $m_{i_2},$
we find that as a function of those two variables,
$L_+(P_0;\,m_1,\dots,m_{m_0})$ cannot be a polynomial.
Indeed, one can strengthen the ordering of $P_0$
to get an ``almost-chain'' $P'_0,$ in which the only pair of
incomparable elements is $\{x_{i_1},\,x_{i_2}\}.$
Then $L_+(P_0;\,m_1,\dots,m_{m_0})\geq
L_+(P'_0;\,m_1,\dots,m_{m_0})= \binom{m_{i_1}+m_{i_2}}{m_{i_1}}.$
But again, for $m_{i_1}=m_{i_2}=m$ we have
$\binom{m_{i_1}+m_{i_2}}{m_{i_1}}= \binom{2m}{m}\geq 2^m,$
which grows too rapidly for a polynomial function.
It follows that
$L_+(P_0;\,m_1,\dots,m_{m_0})$ is not a polynomial
function in any subset of its arguments two of which correspond
to incomparable elements of~$P_0.$

Can $L_+(P_0;\,m_1,\dots,m_{m_0})$ ever
be a polynomial function of more than one of its arguments?
Yes -- in precisely the cases not excluded by the above observations:

\begin{theorem}\label{T.chains}
Let $P_0=(|P_0|,\pq_0)$ be a finite poset, with $|P_0|=
\{x_1,\dots,x_{m_0}\},$ and $S$ any subset of $|P_0|.$
Suppose that in $L_+(P_0;\,m_1,\dots,m_{m_0})$ we fix nonnegative
values for all the $m_i$ such that $x_i\notin S.$
Then \textup{(}regardless of the values so chosen\textup{)}
the resulting function of the remaining variables \textup{(}the $m_i$
with $x_i\in S)$ is given by a polynomial if and only if
$S$ is a chain in $P_0.$
\end{theorem}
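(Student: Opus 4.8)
The "only if" direction is already essentially done by the paragraph preceding the theorem: if $S$ contains two incomparable elements $x_{i_1},x_{i_2}$, then strengthening $\pq_0$ to an ordering $P'_0$ whose only incomparable pair is $\{x_{i_1},x_{i_2}\}$ gives $L_+(P_0;\vec m)\geq L_+(P'_0;\vec m)=\binom{m_{i_1}+m_{i_2}}{m_{i_1}}$, and setting $m_{i_1}=m_{i_2}=m$ forces exponential growth along a line inside the parameter space, which no polynomial can match. So I would open the proof by recording that half in one sentence and then concentrate on the "if" direction.

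For the "if" direction, assume $S$ is a chain in $P_0$; I want to show $L_+(P_0;m_1,\dots,m_{m_0})$ is a polynomial in the variables $\{m_i : x_i\in S\}$ once the other $m_i$ are fixed. The natural approach is induction on $\r{card}(S)$, peeling off one element at a time and invoking Proposition~\ref{P.poly}(a). Concretely, pick $x_{i_0}\in S$ that is $\pq_0$-maximal among the elements of $S$ (or minimal — either works). Proposition~\ref{P.poly}(a) says that, holding \emph{all} other $m_i$ fixed, $L_+$ is a polynomial in $m_{i_0}$ whose degree equals $\sum\{m_i : x_i\ \pq_0\text{-incomparable with}\ x_{i_0}\}$. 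The key point to extract is that this degree does \emph{not} depend on the values of the remaining variables in $S$: since $S$ is a chain, every $x_i\in S\setminus\{x_{i_0}\}$ is \emph{comparable} to $x_{i_0}$, so those variables never enter the index set $I_\sim$ of Proposition~\ref{P.poly}; the degree is $\sum_{x_i\in I_\sim,\ x_i\notin S} m_i$, a constant, call it $D$. Hence, for each fixed choice of the other $S$-variables, $L_+ = \sum_{k=0}^{D} a_k\, m_{i_0}^k$ for suitable coefficients $a_k$, with the \emph{same} bound $D$ throughout.

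Now I would argue that each coefficient $a_k$ is itself a polynomial in the remaining $S$-variables, which closes the induction. This is a standard interpolation move: evaluate $L_+$ at $m_{i_0}=0,1,\dots,D$; by the inductive hypothesis (applied to the chain $S\setminus\{x_{i_0}\}$ in $P_0$, with $m_{i_0}$ now among the \emph{fixed} values — note $S\setminus\{x_{i_0}\}$ is still a chain), each of these $D+1$ evaluations is a polynomial in the remaining $S$-variables; and the coefficients $a_k$ are fixed $\Z$-linear combinations of these $D+1$ values (Lagrange interpolation / inverting the Vandermonde matrix on $\{0,\dots,D\}$), hence polynomials in those variables. Therefore $L_+(P_0;\vec m) = \sum_{k=0}^D a_k(\text{rest})\, m_{i_0}^k$ is jointly a polynomial in all of the $S$-variables. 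The base case $\r{card}(S)\le 1$ is immediate from Proposition~\ref{P.poly}(a) (a single variable) or is vacuous.

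**Anticipated obstacle.** The one place that needs care — and which I would state explicitly rather than wave at — is the claim that the degree bound $D$ in $m_{i_0}$ is uniform over all values of the other $S$-variables. This is exactly where the chain hypothesis is used: if some $x_i\in S$ were incomparable to $x_{i_0}$, then $m_i$ would contribute to the degree, the degree would grow with $m_i$, and the interpolation argument would collapse (no finite set of evaluation points suffices). So the logical skeleton is: chain $\Rightarrow$ uniform degree in each peeled variable $\Rightarrow$ finite interpolation $\Rightarrow$ induction goes through; and the "only if" half shows the uniform-degree step genuinely fails otherwise. I don't expect any computational difficulty beyond citing Lagrange interpolation; the subtlety is purely in getting the quantifiers right (degree uniform in the \emph{other} $S$-variables, for \emph{each} fixed choice of the non-$S$ variables).
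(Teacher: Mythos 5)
Your argument is correct, but it takes a genuinely different route from the paper's. For the ``if'' direction the paper works directly: it fixes a linearization $Q'$ of the lexicographic sum $Q$ of the chains indexed by $|P_0|-S$, then (assuming first that all $m_i$ with $x_i\in S$ are positive) classifies the insertions of the single chain $C$ built from $S$ according to the positions of the top elements of the subchains $C_i$ $(x_i\in S)$; this makes the remaining insertion intervals disjoint and yields the explicit summands $\prod_{x_i\in S}\binom{m_i-1+d_i}{d_i}$ of~\eqref{d.prod_binom}, after which the degenerate cases $m_i=0$ are recovered by a one-variable ``two polynomials agreeing at infinitely many points are equal'' argument via Proposition~\ref{P.poly}(a). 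You instead induct on $\r{card}(S)$, peel off one element $x_{i_0}\in S$, use Proposition~\ref{P.poly}(a) to get a one-variable polynomial whose degree $D=\sum_{i\in I_\sim}m_i$ is \emph{uniform} in the other $S$-variables precisely because the chain hypothesis keeps those variables out of $I_\sim$, and then recover the coefficients by Lagrange interpolation at $m_{i_0}=0,\dots,D$, each evaluation being a polynomial in the remaining $S$-variables by the inductive hypothesis. This is clean, it correctly isolates where the chain hypothesis enters, and it absorbs the $m_i=0$ edge cases automatically. What it buys less of: the paper's construction produces an explicit presentation of the polynomial as a sum of products of binomial coefficients, and both Corollary~\ref{C.chains} and Corollary~\ref{C.degree} are stated as corollaries \emph{to that proof}, leaning on the summands~\eqref{d.prod_binom} and the numbers $d_i$; your induction would still yield Corollary~\ref{C.chains} (use Proposition~\ref{P.poly}(b) and interpolate separately on each parity class), but it gives no direct handle on the total degree computed in Corollary~\ref{C.degree}. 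Your one-sentence treatment of ``only if'' matches the paper, which likewise establishes it in the discussion preceding the theorem.
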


\begin{proof}[Sketch of proof]
We have just seen ``only if''; I shall sketch the proof of ``if''.

Alongside the fixed values for the $m_i$ with $x_i\notin S,$
let us choose arbitrary values for the $m_i$ with $x_i\in S.$
Now let $Q$ denote the lexicographic sum over $(|P_0|-S,\pq_0)$ of
the chains $C_i$ $(x_i\in |P_0|-S),$ and
$C$ the lexicographic sum over $(S,\pq_0)$ of
the chains $C_i$ $(x_i\in S).$
Thus, $C$ is a chain of $\sum_{x_i\in S}\,m_i$ elements.
To specify a linearization of $P_0*(C_1,\dots,C_{m_0}),$
we can first specify a linearization $Q'$ of $Q,$ then specify how the
elements of the chain $C$ are to be inserted among those of $Q'.$
To complete the proof of the theorem,
it will suffice to show that for each of the finitely
many linearizations $Q'$
of $Q,$ the number of ways of positioning the members of $C$
among the members of $Q'$
is a polynomial in the variables $m_i$ $(x_i\in S).$

Given $Q',$ we shall first show that there is a polynomial
which gives that number
whenever {\em positive} integer values are assigned to the
$m_i$ $(i\in S),$ then show that a polynomial with
this property must continue to give that linearization-count when
its arguments are allowed to be zero.

The complication in counting ways that members of $C$ can
be positioned is that the regions
of $Q'$ where the elements of the various
subchains $C_i$ of $C$ can be inserted
are possibly overlapping intervals, so that information
on where members of one of
those subchains are distributed within its allowed
interval may or may not restrict where
other subchains $C_{i'}$ can be distributed within theirs.

But suppose now that the $m_i$ with $x_i\in S$ are all nonzero,
so that the $C_i$ are all nonempty.
In this case, let us further classify
linearizations of $Q'\cup C$ according to the positions,
relative to the elements of $Q',$ of
the {\em greatest} elements of each of the chains $C_i$ $(x_i\in S);$
i.e., according to which successive
pair of the finitely many elements of $Q'$ each of these
greatest elements
lies between, or whether it lies above or below all of $Q'.$
Once such a set of positions has been specified, we know the linear
ordering of the union of $Q'$ with the set of maximal
elements of those chains (since $S$ is linearly ordered, hence
so is the set of maximal elements of the $C_i$ with $m_i\in S).$
Let us call that union, so ordered, $Q''.$
Then the intervals of $Q''$ in which the {\em remaining}
elements of each $C_i$ can be inserted are disjoint.
If, for a given $C_i,$ that interval of $Q''$
contains $d_i$ elements (where $d_i$
may be $0$ if, for instance, the top members of
$C_i$ and of the next lower $C_{i'}$ have
been placed between the same pair
of elements of $Q'),$ then there are $\binom{m_i-1+d_i}{d_i}$ ways
to populate it with the $m_i-1$ nonmaximal members of $C_i.$
This is a polynomial in $m_i$ (of degree $d_i),$ hence
the total number of linearizations of $P_0*(C_1,\dots,C_{m_0})$
extending our linearization of $Q''$ is
\begin{equation}\begin{minipage}[c]{35pc}\label{d.prod_binom}
$\prod_{x_i\in S} \binom{m_i-1+d_i}{d_i},$
\end{minipage}\end{equation}
which is a polynomial in the variables $m_i$ $(x_i\in S).$
Summing over the finitely many linearizations $Q'$ of $Q,$ and
the finitely many ways of positioning within
each $Q'$ the highest members of the chains $C_i,$
we get a polynomial $f$ which yields
the value of $L_+(P_0;\,m_1,\dots,m_{m_0})$ -- provided that, as
assumed above, none of $m_i$ with $x_i\in S$ is zero.

The key to proving that
the same polynomial works if one or more of those $m_i$ is zero
is Proposition~\ref{P.poly}(a),
which says that if we fix all but one of the $m_i,$
then $L_+(P_0;\,m_1,\dots,m_{m_0})$ is a polynomial in that variable.
Hence, if we choose for all of our $m_i$ other
than some particular $m_{i_0}$ nonzero values, then
the resulting polynomial function in the variable $m_{i_0}$
must agree with the polynomial obtained
in the preceding paragraph at all values of $m_{i_0}$ except
possibly $0.$
But two $\!1\!$-variable real
polynomials which agree at infinitely inputs are equal,
so in fact they will also agree when $m_{i_0}=0.$
This shows that the result of the preceding paragraph extends to the
case where {\em at most one} of the $m_i$ is zero.
An obvious induction on the number of zero arguments gives
the general case.
\end{proof}

The easy part of the above result, saying that if
$S$ is {\em not} a chain, then
$L_+(P_0;\,m_1,\dots,m_{m_0})$ does {\em not} yield polynomial
functions of the corresponding set of variables,
does not have an obvious analog for the sign-imbalance
function $L_-(P_0;\,m_1,\dots,m_{m_0}),$
since it is based on finding a summand that is
too big to be a polynomial; but in the
computation of $L_-,$ ``big'' summands of opposite sign can cancel.
And indeed, we have seen classes of cases where the resulting
function is $0,$ which is certainly a polynomial.

However, the hard part of the above theorem, concerning the case where
$S$ is a chain, does go over to $L_-,$ mutatis mutandis:

\begin{corollary}[to proof of Theorem~\ref{T.chains}] \label{C.chains}
Let $P_0=(|P_0|,\pq_0)$ be as in Theorem~\ref{T.chains},
and let $S\subseteq |P_0|$ be a chain.
Then if we fix nonnegative values for all the $m_i$ with $x_i\notin S,$
the values of $L_-(P_0;\,m_1,\dots,m_{m_0})$ are given
by $2^{\r{card}(S)}$ polynomials in the $m_i$ such that $x_i\in S,$
one for each choice of the parities of these $\r{card}(S)$ variables.
\end{corollary}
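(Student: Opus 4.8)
The plan is to rerun the proof of the ``if'' direction of Theorem~\ref{T.chains}, now keeping track of signs, with Proposition~\ref{P.poly}(b) taking over the role that Proposition~\ref{P.poly}(a) played there. First I would fix the values of the $m_i$ with $x_i\notin S,$ choose a parity for each $m_i$ with $x_i\in S,$ and, to begin with, take all of the latter positive. Exactly as in the proof of Theorem~\ref{T.chains}, a linearization of $P=P_0*(C_1,\dots,C_{m_0})$ is built by choosing a linearization $Q'$ of the lexicographic sum $Q$ of the chains $C_i$ $(x_i\notin S),$ then --- since $S$ is a chain, so that the chains $C_i$ $(x_i\in S)$ occur in a forced relative order in any linearization of $P$ --- choosing the positions, relative to $Q',$ of the maximal elements of those $C_i;$ this yields a linearly ordered set $Q'',$ into whose pairwise disjoint intervals the remaining elements of the $C_i$ $(x_i\in S)$ must then be interleaved. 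As there, the number $d_i$ of elements of $Q''$ lying in the interval belonging to $C_i$ depends only on $Q'$ and on the chosen positions, not on the $m_i$ with $x_i\in S.$

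The new point is to track signs. Relative to a fixed reference completion of $Q'',$ every other completion differs only by rearrangements within the disjoint intervals, and these rearrangements have disjoint support; hence the sign of any completion relative to the reference ordering $\pq_\r{ch}$ is $\varepsilon$ --- the sign of the reference completion relative to $\pq_\r{ch}$ --- times the product over $x_i\in S$ of the signs of the rearrangements within the respective intervals. Summing, the contribution of this block of linearizations to $L_-(P_0;m_1,\dots,m_{m_0})$ is $\varepsilon$ times the product over $x_i\in S$ of the sign-imbalance of the disjoint union of a chain of $m_i-1$ elements and a chain of $d_i$ elements; by Proposition~\ref{P.antich} applied with $P_0$ a $\!2\!$-element antichain --- equivalently, by reading off the diagonal entries of the right-hand array of~\eqref{d.Pascal} --- each such factor is either identically zero (when $m_i-1$ and $d_i$ are both odd, a condition settled once the parity of $m_i$ is fixed, $d_i$ being fixed) or the binomial coefficient $\binom{\lfloor(m_i-1)/2\rfloor+\lfloor d_i/2\rfloor}{\lfloor d_i/2\rfloor},$ which, for $m_i$ of the chosen parity, is a polynomial in $m_i.$ Thus the product of binomial coefficients~\eqref{d.prod_binom} in the proof of Theorem~\ref{T.chains} is here replaced by $\varepsilon$ times a product of such two-variable sign-imbalances; summing over the finitely many choices of $Q'$ and of positions then shows that, for each parity-vector of the variables $m_i$ $(x_i\in S),$ the function $L_-(P_0;m_1,\dots,m_{m_0})$ agrees, at all positive values of those variables, with some polynomial in them.

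Passing from positive to nonnegative values would go exactly as in Theorem~\ref{T.chains}, but using Proposition~\ref{P.poly}(b) in place of~(a): fixing all variables but one, $m_{i_0}$ with $x_{i_0}\in S,$ we know $L_-$ is given on even inputs of $m_{i_0}$ by a single polynomial, which must therefore coincide with the polynomial found above wherever that already applies --- hence, two one-variable real polynomials agreeing at infinitely many points being equal, also at $m_{i_0}=0;$ an induction on the number of vanishing variables among the $m_i$ $(x_i\in S)$ then produces the $2^{\r{card}(S)}$ polynomials of the statement, one per parity-vector. The step I expect to be the main obstacle is verifying that the sign $\varepsilon,$ for fixed $Q'$ and fixed positions, depends on the $m_i$ $(x_i\in S)$ only through their parities. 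I would check this by a routine count of inversions between the reference completion of $Q''$ and $\pq_\r{ch},$ grouping the pairs of elements according to the chains they come from: each group contributes to the exponent of $-1$ either a term independent of the $m_i,$ or one of the form $a\,m_i$ or $b\,m_i m_{i'}$ with $a,\,b$ fixed integers, so that $\varepsilon$ is a fixed sign times $-1$ raised to a quantity depending only on the parities of the $m_i$ $(x_i\in S).$ Everything else is a transcription of the arguments already given for Theorem~\ref{T.chains} and Proposition~\ref{P.poly}(b).
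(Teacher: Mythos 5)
Your proposal is correct and follows essentially the same route as the paper, whose own ``Idea of proof'' consists of a single sentence saying to mimic the proof of Theorem~\ref{T.chains} with the two-chain sign-imbalance polynomials (the right-hand array of~\eqref{d.Pascal}, Proposition~\ref{P.poly}(b)) in place of the binomial coefficients. You have in fact filled in the one point the paper's sketch leaves entirely implicit --- that the sign $\varepsilon$ of a reference completion depends on the $m_i$ $(x_i\in S)$ only through their parities --- and your inversion-counting justification of that point is sound.
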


\begin{proof}[Idea of proof]
Mimic the argument in the proof of Theorem~\ref{T.chains}
using, in place of the polynomials giving binomial coefficients
as functions of one of their variables,
the pairs of polynomials similarly describing the sign-imbalance of
a union of two chains,
illustrated by the right-hand side of~\eqref{d.Pascal} and
formalized in Proposition~\ref{P.poly}(b).
\end{proof}


\subsection{The total degrees of our multivariable polynomials}\label{SS.deg}
Proposition~\ref{P.poly}(a) described the degree of the one-variable
polynomial it referred to.
We can similarly determine the total degree of the
$\!\r{card}(S)\!$-variable polynomial of Theorem~\ref{T.chains}.
In doing so, we will use the following curious lemma,
which says that one can strengthen the ordering
of a poset so as to get rid of one class of incomparability
conditions, while preserving ``enough of''
another related class of such conditions.

\begin{lemma}\label{L.incomp}
Let $P=(|P|,\pq)$ be a finite poset, and $S$ a chain in $P.$
Then the ordering $\pq$ of $P$ can be strengthened to an
ordering $\pq'$ under which the complement of $S$ in $P$ also becomes
a chain, while every element of $P$ that is
incomparable with at least one element of $S$ under $\pq$ remains
incomparable with at least one element of $S$ under ${\pq'}.$
\end{lemma}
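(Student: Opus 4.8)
The plan is to leave $\pq$ unchanged on all pairs involving $S$ for as long as possible, adding new relations only between elements of $T:=|P|\setminus S$, and to choose those new relations so that no element of $T$ acquires any new $\pq$-predecessor inside $S$. To set up, for $t\in T$ write $D(t)=\{s\in S:s\pq t\}$ and $U(t)=\{s\in S:t\pq s\}$; since $S$ is a chain, $D(t)$ is an initial segment and $U(t)$ a final segment of $S$, and the two are disjoint since $t\notin S$. Thus ``$t$ is incomparable with at least one element of $S$'' says exactly that $|D(t)|+|U(t)|<\r{card}(S)$; write $\ell(t)=|D(t)|$. First I would record the monotonicity this setup enjoys. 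If $t\pq t'$ then $D(t)\subseteq D(t')$ and $U(t')\subseteq U(t)$, so $\ell$ is isotone along $\pq$ restricted to $T$; and since containment of initial segments of the fixed chain $S$ is governed purely by cardinality, $\ell(t)\le\ell(t')$ already forces $D(t)\subseteq D(t')$. The crucial refinement is: if $t\pq t'$ with $\ell(t)=\ell(t')$, then $D(t)=D(t')$ while $U(t')\subseteq U(t)$, so any $s\in S$ incomparable with $t$ is incomparable with $t'$ as well; equivalently, among the elements of $T$ of a given $\ell$-value, those comparable with \emph{every} element of $S$ form a down-set of $\pq|_T$.

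Next I would linearly order $T$ as $t_1,\dots,t_m$ so that (a) $\ell(t_1)\le\dots\le\ell(t_m)$; (b) within each block of constant $\ell$, every element comparable with all of $S$ precedes every element incomparable with some element of $S$; and (c) the order refines $\pq|_T$. Such an order exists: (a) is compatible with $\pq|_T$ by isotonicity of $\ell$, within a block (b) is compatible with $\pq|_T$ by the down-set observation above, and one takes the lexicographic combination of (a), (b), and any linear extension of $\pq|_T$ on the remaining sub-blocks. I would then let $\pq'$ be the transitive closure of $\pq$ together with all relations $t_i\pq' t_j$ for $i<j$. That $\pq'\supseteq\pq$ is clear, and $|P|\setminus S=T$ becomes the chain $t_1\pq'\dots\pq' t_m$; the one fiddly point is that $\pq'$ is genuinely a partial order, i.e.\ that the generating relation is acyclic --- a directed cycle would, using the chain structure of $S$, force some relation $t_j\pq t_i$ with $i<j$, contradicting (c). (Concretely one can exhibit a function $|P|\to\mathbb{R}$ strictly increasing along every generating relation, which precludes cycles.)

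Finally I would verify incomparability-preservation. A short analysis of directed paths shows that for $s\in S$ one has $s\pq' t_i$ iff $s\pq t_j$ for some $j\le i$, and $t_i\pq' s$ iff $t_j\pq s$ for some $j\ge i$. Since $\ell$ --- and hence, by the remark above, $D$ --- is non-decreasing along $t_1,\dots,t_m$, the first statement gives $D_{\pq'}(t_i)=D_\pq(t_i)$: no element of $T$ has gained an $S$-predecessor, which is precisely the point of ordering by $\ell$. The second gives $U_{\pq'}(t_i)=\bigcup_{j\ge i}U_\pq(t_j)$, a union of final segments of $S$, hence $U_\pq(t_{j^*})$ for some $j^*\ge i$ with $U_\pq(t_{j^*})$ of maximal size. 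If $\ell(t_{j^*})>\ell(t_i)$, then $|U_\pq(t_{j^*})|\le\r{card}(S)-\ell(t_{j^*})\le\r{card}(S)-\ell(t_i)-1$; if $\ell(t_{j^*})=\ell(t_i)$ and $t_i$ is incomparable with some element of $S$, then $t_i$ lies in the ``gap'' part of its block, so by (b) the later element $t_{j^*}$ does too, and again $|U_\pq(t_{j^*})|\le\r{card}(S)-\ell(t_i)-1$. Either way $|D_{\pq'}(t_i)|+|U_{\pq'}(t_i)|=\ell(t_i)+|U_\pq(t_{j^*})|<\r{card}(S)$, so $t_i$ remains incomparable with some element of $S$.

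The step I expect to be the main obstacle is not any single computation but rather hitting on the construction: noticing that sorting $T$ by $\ell$ automatically prevents every $D(t)$ from growing, which concentrates the whole difficulty on the $U$-side within a single $\ell$-level, where the ``comparable with all of $S$ is a down-set'' observation of paragraph one is exactly what keeps the gap elements from being over-filled; verifying that $\pq'$ really is a partial order is routine but slightly technical.
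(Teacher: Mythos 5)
Your proof is correct. It is, however, a genuinely different argument from the one the paper gives for Lemma~\ref{L.incomp}: the paper proceeds iteratively, adding one relation at a time between a $\pq$-incomparable pair $x,y\in|P|-S$ and showing (via the identity~\eqref{d.pq_xy} and a case analysis on $({\downarrow}x\cap S)\cup({\uparrow}y\cap S)$ versus $({\downarrow}y\cap S)\cup({\uparrow}x\cap S)$) that at least one of the two orientations preserves the incomparability property, whereas you produce the whole linearization of $|P|-S$ in one shot. Your construction is closest in spirit to the alternative proof after S.\,Felsner sketched in section~\ref{S.re_L.incomp}, which also sorts $|P|-S$ by position relative to $S$; but the sort keys differ (Felsner files a ``gap'' element under the largest $s_i\in S$ incomparable with it and interleaves these with the comparable-with-all-of-$S$ elements, while you sort by $\ell(t)=\r{card}({\downarrow}t\cap S)$ with comparable-with-all elements first within each level --- these can order the same two elements oppositely, yet both work), and Felsner prescribes the new relations to $S$ explicitly where you take a transitive closure and then compute $D_{\pq'}$ and $U_{\pq'}$ from it. Your key observations --- that ${\downarrow}t\cap S$ and ${\uparrow}t\cap S$ are initial/final segments of the chain $S$ so that all the relevant unions collapse to a single term, and that within an $\ell$-level the elements comparable with all of $S$ form a down-set --- are sound, and the degenerate cases check out. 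What the paper's iterative route buys that yours does not is the by-product exploited in Corollary~\ref{S.xy_or_yx}: a characterization of exactly which single-pair strengthenings are admissible, and hence of \emph{all} linearizations of $|P|-S$ with the stated property; your route (like Felsner's) is shorter and more explicit but produces only one such linearization. One small caveat: your parenthetical suggestion to certify acyclicity by exhibiting a real-valued function strictly increasing along every generating relation is circular as stated (such a function exists iff the relation is acyclic), but your primary acyclicity argument --- a cycle would force $t_j\pq t_i$ with $i<j$, contradicting that your listing refines $\pq|_T$ --- is correct and suffices.
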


\begin{proof}
Given any pair of incomparable elements $x,y\in|P|-S,$
consider the strengthenings $\pq_{x,y}$ and $\pq_{y,x}$
of $\pq$ obtained by imposing the relation $x\pq_{x,y} y,$
respectively $y\pq_{y,x} x.$
We shall show that
\begin{equation}\begin{minipage}[c]{35pc}\label{d.xy_or_yx}
At least one of $\pq_{x,y},$ $\pq_{y,x}$ has the property
that every element of $P$ that is incomparable under
$\pq$ with at least one element of $S$ remains incomparable
under that strengthened ordering with at least one element of $S.$
\end{minipage}\end{equation}
Repeatedly strengthening our partial order in this way, we eventually
get an ordering under which $|P|-S$ has no incomparable elements,
i.e., is a chain, as desired.

It is easy to check (and probably well-known -- reference, anyone?)
that the relation $\pq_{x,y}$ can be
characterized by the condition that for all $w,z\in|P|,$
\begin{equation}\begin{minipage}[c]{35pc}\label{d.pq_xy}
$w\pq_{x,y} z$ if and only if either $w\pq z,$
or $w\pq x$ and $y\pq z.$
\end{minipage}\end{equation}
Thus, if an element $u\in|P|$ satisfies a relation
under $\pq_{x,y}$ that it does {\em not} satisfy under $\pq,$
we must have either $u\pq x$ or $y\pq u.$
(Note, incidentally, that these two cases are mutually exclusive, since
$x$ and $y$ were assumed $\pq\!$-incomparable.)

Suppose now that we have an element $u\in|P|$ such that
\begin{equation}\begin{minipage}[c]{35pc}\label{d.u}
$u$ is $\!\pq\!$-incomparable with at least one element of $S,$ but
is $\!\pq_{x,y}\!$-comparable with all elements of $S.$
\end{minipage}\end{equation}
Of the two alternatives noted at the end of the sentence
following~\eqref{d.pq_xy}, let us begin by assuming
\begin{equation}\begin{minipage}[c]{35pc}\label{d.u_pq_x}
$u\,\pq\,x.$
\end{minipage}\end{equation}
Let us write ${\downarrow}u$ for $\{z\in|P|\mid z\pq u\}$
and ${\downarrow_{x,y}}\,u$ for $\{z\in|P|\mid z\pq_{x,y} u\},$
and make the obvious corresponding definitions of
${\uparrow}u$ and ${\uparrow_{x,y}}\,u.$
The statement that $u$ is $\!\pq_{x,y}\!$-comparable with all
elements of $S$ thus says that
\begin{equation}\begin{minipage}[c]{35pc}\label{d.cup=S}
$({\downarrow_{x,y}}\,u\cap S)\,\cup\,({\uparrow_{x,y}}\,u\cap S)
\ =\ S.$
\end{minipage}\end{equation}
Now from~\eqref{d.pq_xy} and~\eqref{d.u_pq_x}, it follows that
\begin{equation}\begin{minipage}[c]{35pc}\label{d.down+up}
${\downarrow_{x,y}}\,u ={\downarrow}u$ \ and
\ ${\uparrow_{x,y}}\,u\ =\ {\uparrow}u \cup {\uparrow}y.$
\end{minipage}\end{equation}
(The first relation is gotten by putting $u$ in the role of $z$
in~\eqref{d.pq_xy}, and letting
$w$ range over $P;$ the second by putting
$u$ in the role of $w$ and letting $z$ range over $P.)$
Since $S$ is a chain, $({\uparrow}u \cup {\uparrow}y)\cap S$
must be either ${\uparrow}u\cap S$ or ${\uparrow}y\cap S.$
If it were ${\uparrow}u\cap S,$ then in view
of~\eqref{d.down+up},~\eqref{d.cup=S} would
say that $u$ was $\!\pq\!$-comparable with all
elements of $S,$ contrary to the first condition of~\eqref{d.u}.
Hence it is ${\uparrow}y\cap S,$ and~\eqref{d.cup=S} instead says
\begin{equation}\begin{minipage}[c]{35pc}\label{d.cup=S.2}
$({\downarrow}u\cap S)\,\cup\,({\uparrow}y\cap S)\ =\ S.$
\end{minipage}\end{equation}
In view of~\eqref{d.u_pq_x}, ${\downarrow}u\subseteq {\downarrow}x,$
so~\eqref{d.cup=S.2} implies
\begin{equation}\begin{minipage}[c]{35pc}\label{d.cup=S.3}
$({\downarrow}x\cap S)\,\cup\,({\uparrow}y\cap S)\ =\ S.$
\end{minipage}\end{equation}

If, rather than~\eqref{d.u_pq_x} we are in the other case,
$y\pq u,$ we get the variant of~\eqref{d.cup=S.3} with $\uparrow$
and $\downarrow$ reversed
and the roles of $x$ and $y$ interchanged -- which
is again~\eqref{d.cup=S.3}.
So~\eqref{d.cup=S.3} holds in either case.

Now suppose that in addition to an element $u$ satisfying~\eqref{d.u},
there is also an element $v\in|P|$ such that
\begin{equation}\begin{minipage}[c]{35pc}\label{d.v}
$v$ is $\!\pq\!$-incomparable with at least one element of $S,$
but is $\!\pq_{y,x}\!$-comparable with all elements of~$S.$
\end{minipage}\end{equation}
Then we get the variant of~\eqref{d.cup=S.3} with only
the roles of $x$ and $y$ interchanged:
\begin{equation}\begin{minipage}[c]{35pc}\label{d.cup=S.4}
$({\downarrow}y\cap S)\,\cup\,({\uparrow}x\cap S)\ =\ S.$
\end{minipage}\end{equation}

From~\eqref{d.cup=S.3} and~\eqref{d.cup=S.4}, it is not hard
to deduce that ${\downarrow}x\cap S ={\downarrow}y\cap S$
and ${\uparrow}x\cap S ={\uparrow}y\cap S,$ and that these
are complementary subsets of $S;$ i.e., that both $x$
and $y$ are $\!\pq\!$-comparable with all elements of $S,$ and
that the order relation of each element of $S$ with $x$
is the same as its order relation with $y.$
(Quick Venn diagram proof: Draw a square, to represent
properties of an element $s\in S.$
Divide it by vertical lines into three regions according to the
$\!\pq\!$-relation of $s$ to $x:$ ``smaller'', ``incomparable'',
or ``greater''; and similarly by horizontal lines according
to its $\!\pq\!$-relation to $y.$
Interpret each of~\eqref{d.cup=S.3} and~\eqref{d.cup=S.4} as saying
that every $s\in S$ lies in a certain region of this diagram.
Shade those regions, and note their intersection.)

It follows from this and~\eqref{d.pq_xy}
that passing from $\pq$ to $\pq_{x,y}$
does not affect the order-relation or lack of it between any element
of $P$ and any element of $S$ (and similarly for $\pq_{y,x}).$
This contradicts our assumption~\eqref{d.u} (and also~\eqref{d.v}).
Thus, our assumption that there existed both $u$ satisfying~\eqref{d.u}
and $v$ satisfying~\eqref{d.v} has led to a contradiction,
proving~\eqref{d.xy_or_yx}, and completing the proof of the lemma.
\end{proof}

Some further observations related to the above lemma
are noted in an appendix, section~\ref{S.re_L.incomp}.

We can now get the result we are aiming for.
Recall that the {\em total degree} of a polynomial $f$ in
several variables means
the maximum, over the monomials occurring in $f,$ of the sum
of the exponents of the variables.

\begin{corollary}[to proof of Theorem~\ref{T.chains}]\label{C.degree}
Let $P_0$ be a finite poset and $S$ a chain in $P_0,$
and, as in Theorem~\ref{T.chains}, consider the function
$L_+(P_0;\,m_1,\dots,m_{m_0})$ with fixed values chosen
for all the $m_i$ with $x_i\notin S,$ as a function of
the values of $m_i$ with $x_i\in S.$
Let $f$ be the polynomial in the latter $\r{card}(S)$
variables which, by that theorem, gives this function.

Then the total degree of $f$ is equal to the sum of $m_i$ over those
$x_i$ that are incomparable in $P_0$ with at least one element of $S.$
\end{corollary}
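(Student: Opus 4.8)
The plan is to read off the total degree from the explicit description of $L_+$ obtained in the proof of Theorem~\ref{T.chains}. Recall that that argument produces, for every assignment of \emph{positive} integers to the variables $m_i$ with $x_i\in S$, the value
\begin{equation*}
L_+(P_0;\,m_1,\dots,m_{m_0})\ =\ \sum_{(Q',\,\mathrm{pos})}\ \prod_{x_i\in S}\binom{m_i-1+d_i}{d_i},
\end{equation*}
where $(Q',\mathrm{pos})$ runs over the finitely many linearizations $Q'$ of $Q$ together with the finitely many placements of the maximal elements of the chains $C_i$ ($x_i\in S$) among them, and $d_i$ counts the elements of the resulting linearly ordered set $Q''$ lying in the interval into which the non-maximal elements of $C_i$ may be inserted; cf.\ \eqref{d.prod_binom}. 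A finite sum of polynomials that agrees with $f$ at all tuples of positive integers must equal $f$, so this is $f$. Each summand is a polynomial of total degree $\sum_{x_i\in S}d_i$ whose top-degree form, $\prod_{x_i\in S}m_i^{d_i}/d_i!$, has strictly positive coefficient; hence the top-degree forms of the summands cannot cancel, and the total degree of $f$ equals $\max_{(Q',\,\mathrm{pos})}\sum_{x_i\in S}d_i$. Writing $D$ for the number asserted by the corollary --- the sum of $m_i$ over those $x_i$ that are $\pq_0$-incomparable with at least one element of $S$, equivalently over the $x_i\in|P_0|-S$ with this property, since $S$ is a chain --- it remains to show this maximum equals $D$.

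For the inequality $\le D$: in any $Q''$, an element $x_{i',j'}$ of $Q'$ can lie in the insertion interval of $C_i$ ($x_i\in S$) only if $x_{i'}$ is $\pq_0$-incomparable with $x_i$, since $x_{i'}\pq_0 x_i$ (resp.\ $x_i\pq_0 x_{i'}$) forces $x_{i',j'}$ below (resp.\ above) every non-maximal element of $C_i$. As the insertion intervals of the various $C_i$ ($x_i\in S$) are pairwise disjoint, and the only members of $Q''$ not coming from $Q'$ are the maximal elements of the $C_i$ ($x_i\in S$), which lie in none of these intervals, we get $\sum_{x_i\in S}d_i\le D$ for every $(Q',\mathrm{pos})$.

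For the reverse inequality I would exhibit one $(Q',\mathrm{pos})$ attaining $\sum_{x_i\in S}d_i=D$, and this is where Lemma~\ref{L.incomp} enters. Apply it to $P_0$ and the chain $S$: there is a refinement $\pq_0'$ of $\pq_0$ under which $T:=|P_0|-S$ also becomes a chain, say $t_1,\dots,t_q$ in $\pq_0'$-increasing order, while every element $\pq_0$-incomparable with some element of $S$ is still $\pq_0'$-incomparable with some element of $S$; conversely $\pq_0'\supseteq\pq_0$ makes $\pq_0'$-incomparability with $S$ imply $\pq_0$-incomparability with $S$, so the two coincide. Let $S=\{s_1,\dots,s_p\}$ in $\pq_0'$-increasing order, and for each $k$ let $a_k$ be the number of $s_l$ with $s_l\pq_0' t_k$; these are nested initial segments of $S$, so $a_1\le\cdots\le a_q$. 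Take $Q'$ to be the concatenation $C_{t_1}C_{t_2}\cdots C_{t_q}$ (a linearization of $Q$ because $\pq_0\subseteq\pq_0'$), and place the maximal element $\mu_l$ of $C_{s_l}$ so that exactly the blocks $C_{t_k}$ with $a_k<l$ precede it --- concretely, insert the $\mu_l$ with $a_k<l\le a_{k+1}$, in order, immediately after $C_{t_k}$. Using the monotonicity of the $a_k$ and $\pq_0\subseteq\pq_0'$ one checks this is a genuine linearization of $P_0*(C_1,\dots,C_{m_0})$. Now fix any $t_k$ incomparable with some element of $S$; then $s_{a_k+1}$ is one such element and $C_{t_k}$ sits between $\mu_{a_k}$ and $\mu_{a_k+1}$. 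Among the blocks occupying that gap (namely the $C_{t_{k'}}$ with $a_{k'}=a_k$, which form an interval of the chain $T$), those $\pq_0'$-below $s_{a_k+1}$ form an initial stretch ending before $C_{t_k}$ and those $\pq_0'$-above $s_{a_k+1}$ a final stretch beginning after it --- the convexity of ``$\pq_0'$-incomparable with $s_{a_k+1}$'' inside a chain-interval --- and \emph{a fortiori} the same holds with $\pq_0$ in place of $\pq_0'$. Hence all $m_{t_k}$ elements of $C_{t_k}$ fall strictly inside the insertion interval of $C_{s_{a_k+1}}$ and are counted by $d_{s_{a_k+1}}$. Summing over all such $t_k$, each counted once in its own gap, gives $\sum_{x_i\in S}d_i\ge D$, so the maximum is $D$ and the total degree of $f$ is $D$.

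The main obstacle is precisely the last verification: ensuring that in the constructed linearization the incomparable block $C_{t_k}$ is not merely located between the correct pair $\mu_{a_k},\mu_{a_k+1}$ but is also kept clear of the forced-below and forced-above blocks sharing that gap. This is exactly what Lemma~\ref{L.incomp} buys us --- by making $T$ a chain while preserving the relevant incomparabilities, it forces the blocks in each gap to occur in the order (forced below $s$) then (incomparable with $s$) then (forced above $s$), so that the incomparable blocks are never squeezed out of the insertion interval.
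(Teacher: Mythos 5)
Your argument is correct and follows the paper's own proof in all essentials: the same reduction of the total degree to $\max\sum_{x_i\in S}d_i$ via positivity of the leading coefficients of the summands~\eqref{d.prod_binom}, the same upper bound from the observation that only elements of chains $C_{i'}$ with $x_{i'}$ incomparable to some member of $S$ can lie in the (disjoint) insertion intervals, and the same use of Lemma~\ref{L.incomp} to exhibit a linearization realizing the bound. The only (harmless) differences are that you apply the lemma to $P_0$ itself rather than to the lexicographic sum having singletons in the positions indexed by $S$, and that you carry out the ``place each top element as high as possible'' step more explicitly than the paper's sketch does.
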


\begin{proof}[Sketch of proof]
The degree we are looking for will be the
maximum of the total degrees, in the variables
corresponding to the elements of $S,$
of the {\em leading terms} of the polynomials~\eqref{d.prod_binom}
that are summed to get $f.$
(A general multivariable polynomial does not have a
well-defined ``leading term'', but the meaning
of that phrase for the polynomials~\eqref{d.prod_binom} is clear.
When we sum the terms~\eqref{d.prod_binom},
the leading terms of some of these polynomials may
be cancelled by negative-coefficient non-leading terms of
higher-degree polynomials;
but this cannot happen to any of the leading terms
that have maximum total degree.)
Now for each such polynomial, its total degree in the variables
we are interested in is the sum over $x_i\in S$ of the
number called $d_i$ in the proof of Theorem~\ref{T.chains}; i.e., the
number of elements of $Q$ lying
in the range into which the non-maximal
members of the subchain $C_i$ can be placed.
Every element in one of those ranges must belong to a $C_i$
$(x_i\in |P|-S)$
such that $x_i$ is incomparable with at least one member
of $S;$ so the sum of the cardinalities of those chains
$C_i,$ i.e., the sum of the corresponding $m_i,$
is indeed an upper bound for the desired degree.

To get a linearization $Q'$ of the $Q$ of that proof with
the help of which we can realize
that upper bound, we apply Lemma~\ref{L.incomp},
taking for the $P$ of that lemma the lexicographic
sum over $P_0$ having a chain of length $m_i$ in the $\!i\!$-th
position for all $i$ with $x_i\notin S,$ while for $x_i\in S,$
it has a singleton, which we continue to denote $x_i.$
Taking the set of these singletons for the $S$ of that lemma,
we get an ordering $\pq'$ of $P$ that makes $|P|-S = |Q|$ a chain,
and we take this linearization of $Q$ to be our $Q'.$

To single out a polynomial~\eqref{d.prod_binom}, we now need to choose
for each $i$ with $x_i\in S$ the position where, in the
construction of Theorem~\ref{T.chains}, we will locate
the largest element of the chain
$C_i$ relative to the elements of $Q'.$
Let us place each of these as high as we can consistent with
the order $\pq'.$
I.e., for the top $x_i\in S,$ we place
$x_{i,m_i}$ just below the least element of $Q'$ that,
under the ordering constructed, is above $x_i,$
if there is one; if not, we place it
above all elements of $Q';$ and we then place the top elements
of successively lower chains $C_i$ for $x_i\in S$ as high as they
can go relative to $Q'$ and the elements we have put down so far.
One finds that every element of $Q'$ incomparable
under $\pq'$ with at least one member of $S$ is in the range which,
in the construction of Theorem~\ref{T.chains}, can be populated
by elements of $C_i$ for some $x_i\in S;$ so the
asserted total degree is achieved.
\end{proof}

\section{Associativity of the lexicographic sum, and its consequences}\label{S.iteration}
Suppose, as in our general description of lexicographic sums, that
$P_0=(|P_0|,\pq_0)$ is a poset with $|P_0|=\{x_1,\dots,x_{m_0}\},$
and that for each $i\in\{1,\dots,m_0\}$ we are given a poset
$P_i=(|P_i|,\pq_i)$ with $|P_i|=\{x_{i,1},\dots,x_{i,m_i}\},$
where $x_{i,j}$ and $x_{i',j'}$ are distinct unless $(i,j)=(i',j').$

Now suppose further that for each pair $(i,j)$
with $1\leq i\leq m_0$ and $1\leq j\leq m_i,$ we are
given a poset $P_{i,j}=(|P_{i,j}|,\pq_{i,j})$ with
$|P_{i,j}|=\{x_{i,j,1},\dots,x_{i,j,m_{i,j}}\},$
such that $x_{i,j,k}$ and $x_{i',j',k'}$ are distinct
unless $(i,j,k)=(i',j',k').$
Then we can define a partial ordering on
\begin{equation}\begin{minipage}[c]{35pc}\label{d.xijk}
$|P|\ =\ \{x_{i,j,k}\mid 1\leq i\leq m_0,
\ 1\leq j\leq m_i,\ 1\leq k\leq m_{i,j}\}$
\end{minipage}\end{equation}
by letting
\begin{equation}\begin{minipage}[c]{35pc}\label{d.vijk}
$x_{i,j,k}\pq x_{i',j',k'}$ if and only if either\\[.1em]
\hspace*{1em}$i\neq i'$ and $x_i\pq_0 x_{i'}$
in $P_0,$ or\\
\hspace*{1em}$i=i'$ but $j\neq j',$ and $x_{i,j}\pq_i x_{i,j'}$
in $P_i,$ or\\
\hspace*{1em}$i=i'$ and $j=j',$ and $x_{i,j,k}\pq_{i,j} x_{i,j,k'}$
in $P_{i,j}.$
\end{minipage}\end{equation}
Clearly, the resulting poset $P$ can
be looked at both as the lexicographic sum over $P_0$
of the posets $P_i*(P_{i,1},\dots,P_{i,m_i})$ and
as the lexicographic sum over $P_0*(P_1,\dots,P_{m_0})$ of
the posets $P_{i,j}.$
Denoting their common value $P_0*(P_i)_{1\leq i\leq m_0}*
(P_{i,j})_{1\leq i\leq m_0,\,1\leq j\leq m_i},$ we thus have
\begin{equation}\begin{minipage}[c]{35pc}\label{d.P*P*P}
$P_0*((P_i)*(P_{i,j})_{1\leq j\leq m_i})_{1\leq i\leq m_0}\\[.2em]
\hspace*{2em}=\ P_0*(P_i)_{1\leq i\leq m_0}*
(P_{i,j})_{1\leq i\leq m_0,\,1\leq j\leq m_i}\\[.2em]
\hspace*{4em}=\ (P_0*(P_i)_{1\leq i\leq m_0})*
(P_{i,j})_{1\leq i\leq m_0,\,1\leq j\leq m_i}.$
\end{minipage}\end{equation}
The equality between the first and last lines of~\eqref{d.P*P*P}
constitutes an associative law for lexicographic sums.
(Of course, if one is given families of posets essentially as above,
but without the disjointness assumptions on their underlying sets,
one can construct
lexicographic sums using ordered tuples, as at the beginning
of section~\ref{SS.lex}, and one gets natural {\em isomorphisms},
rather than equalities, in~\eqref{d.P*P*P}.)

Now let us suppose each of the posets $P_{i,j}$ is a chain
$C_{i,j}$ of $m_{i,j}$ elements, but make no such
assumption on $P_0$ or the $P_i.$
Then if we apply the function $L_\pm$ to the expressions
in~\eqref{d.P*P*P}, the final expression is the function
\mbox{$L_\pm(P_0*(P_i)_{1\leq i\leq m_0};
m_{1,1},\dots,m_{m_0,m_{m_0}}),$}
while the initial expression can be computed using Theorem~\ref{T.L(*)}
from the functions $L_\pm(P_0;\,m_1,\dots,m_{m_0})$ and
$L_\pm(P_i;\,m_{i,1},\dots,m_{i,m_i})$ $(i=1,\dots,m_0).$

Thus, if we know the functions $L_\pm(P;\,m_1,\dots)$
for some family of finite posets $P,$ we can compute
using Theorem~\ref{T.L(*)} the corresponding
function for any poset constructed as a lexicographic sum
of members of that family over a member of that family; and,
more generally, for any poset
obtained in that way by iterated lexicographic sums.

In particular, since Propositions~\ref{P.chain} and~\ref{P.antich} give
formulas for $L_\pm(P;\,m_1,\dots)$
when $P$ is a finite chain or antichain, we can use the
above technique to get such formulas for all posets constructed
from chains and antichains by iterated lexicographic sums.

For example, consider the poset $P=$
\raisebox{0.5pt}[7pt][5pt]{ 
\begin{picture}(15,12)
\dotline{0,-3}{0,8}
\dotline{0,8}{10,-3}
\dotline{10,-3}{10,8}
\dotline{10,8}{0,-3}
\end{picture}}.
If we name the bottom two elements $x_{1,1}$ and $x_{1,2},$
and the top two $x_{2,1}$ and $x_{2,2},$ then our poset
is the lexicographic sum of two antichains with underlying sets
$\{x_{1,1}, x_{1,2}\}$ and $\{x_{2,1}, x_{2,2}\},$
over a chain whose element-set we may label $\{x_1,x_2\}.$
With the help of Propositions~\ref{P.chain} and~\ref{P.antich},
we find that $L_+(P;\,m_{1,1},m_{1,2},m_{2,1},m_{2,2})=
\binom{\,m_{1,1}+m_{1,2}}{\ m_{1,1}}
\binom{\,m_{2,1}+m_{2,2}}{\ m_{2,1}},$
while $L_-(P;\,m_{1,1},m_{1,2},m_{2,1},m_{2,2})$
is zero if either both $m_{1,1}$ and $m_{1,2}$ are odd, or
both $m_{2,1}$ and $m_{2,2}$ are odd, while it is
$\binom{\lfloor(m_{1,1}+m_{1,2})/2\rfloor}{\lfloor m_{1,1}/2\rfloor}
\binom{\lfloor(m_{2,1}+m_{2,2})/2\rfloor}{\lfloor m_{2,1}/2\rfloor}$
otherwise.
Using iterated lexicographic sums, one can build up
from chains and antichains arbitrarily complicated posets
for which these functions can similarly be computed.
These are called ``series-parallel'' posets
in \cite[Chapter~9, Exercise~6]{BSWS}.

But ``most'' finite posets are not series-parallel;
the simplest example is $P=\!$
\raisebox{0.5pt}[7pt][5pt]{ 
\begin{picture}(15,12)
\dotline{0,-3}{5,8}
\dotline{5,8}{10,-3}
\dotline{10,-3}{15,8}
\put(15,8){\circle*{2}}
\end{picture}}.
In fact, it is shown in \cite[Chapter~9, Exercises~6--7]{BSWS}
that a finite poset is series-parallel if and only
if it does not contain a copy of that $\!4\!$-element poset.
For further results on the characterization of classes of posets
arising as iterated lexicographic sums
in terms of ``forbidden subposets'', see~\cite{forbidden}.

As mentioned at the end of section~\ref{S.lex},
I have not studied the function $L_\pm(P;\,m_1,m_2,m_3,m_4)$
determined by the above \mbox{$\!4\!$-element} poset
\raisebox{0.5pt}[7pt][5pt]{ 
\begin{picture}(15,12)
\dotline{0,-3}{5,8}
\dotline{5,8}{10,-3}
\dotline{10,-3}{15,8}
\put(15,8){\circle*{2}}
\end{picture}}.
It would be interesting to describe it.

\section{Appendix: notes on Lemma~\ref{L.incomp}}\label{S.re_L.incomp}

I don't know whether Lemma~\ref{L.incomp} has uses
other than as a tool for proving Corollary~\ref{C.degree}; but it
has piqued my curiosity, and I give below several related observations.

First, some quick examples.
For a case of~\eqref{d.xy_or_yx} in which one, but not the other
of $\pq_{x,y}$ and $\pq_{y,x}$ has the property asserted there,
let $P$ be the $\!4\!$-element poset
\raisebox{0.5pt}[7pt][5pt]{ 
\begin{picture}(15,12)
\dotline{0,-3}{5,8}
\dotline{5,8}{10,-3}
\dotline{10,-3}{15,8}
\put(15,8){\circle*{2}}
\end{picture}},
let $S$ be the $\!2\!$-element chain in the middle of that picture,
and let $x$ and $y$ be (necessarily) the two elements of $|P|-S.$
The reader can easily check the details.

For a case of the lemma in which the set of
elements of $S$ incomparable with one or more elements
of $P$ must always decrease when $|P|-S$ is made a chain
(in contrast to the assertion of the lemma,
about elements of $P$ incomparable with one or more elements of $S),$
let $P$ be the disconnected union of a $\!3\!$-element chain
and a singleton, and take for $S$ the subchain
consisting of the top and bottom elements
of the $\!3\!$-element component.

Finally, for an example showing that the statement of the lemma fails
if we drop the assumption that $S$ is a chain,
let $P$ be the disconnected union of two $\!2\!$-element chains, and
let $S$ consist of the two minimal elements of $P.$
Then under any strengthening of the ordering of $P$ that
makes $|P|-S$ a chain, the larger element of $|P|-S$
will lie above, and in
particular, be comparable with, both elements of $S,$
though it was incomparable with one of them under the original ordering.

It is not obvious from the proof we gave of Lemma~\ref{L.incomp}
how to tell, given elements $x$ and $y,$ which
of the orderings $\pq_{x,y}$ and $\pq_{y,x}$ has the property
asserted in~\eqref{d.xy_or_yx}, or whether both do.
The equivalence (a)$\!\iff\!$(c) of the
next corollary, and its variant with the roles of
$x$ and $y$ reversed, give the criteria for
one or the other of those possibilities to be excluded.

\begin{corollary}[to proof of Lemma~\ref{L.incomp}]\label{S.xy_or_yx}
Let $P=(|P|,\pq)$ be a finite poset and $S$ a chain in $P,$
let $x$ and $y$ be two $\!\pq\!$-incomparable elements of $|P|-S,$
and let $\pq_{x,y}$ be the strengthened partial ordering on $P$
gotten by imposing the relation $x\pq_{x,y} y,$
described in~\eqref{d.pq_xy}.
Then the following three conditions are equivalent.

\textup{(a)} \ There is some $u\in |P|$ that is incomparable under
$\pq$ with at least one element of $S,$ but
is comparable under $\pq_{x,y}$ with every element of $S.$

\textup{(b)} \ One of $x$ or $y$ has the above property of being
incomparable under $\pq$ with at least one element of $S,$ but
comparable under $\pq_{x,y}$ with every element of $S.$\vspace{.1em}

\textup{(c)} \ $({\downarrow}x\cap S)\cup({\uparrow}y\cap S)\,=
\,S,$ but $({\downarrow}y\cap S)\cup({\uparrow}x\cap S)
\,\neq\,S.$
\textup{(}Cf.~\eqref{d.cup=S.3},~\eqref{d.cup=S.4}\textup{)}.
\end{corollary}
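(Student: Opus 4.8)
The plan is to prove the cycle of implications (b)$\Rightarrow$(a)$\Rightarrow$(c)$\Rightarrow$(b), leaning throughout on the computations already made in the proof of Lemma~\ref{L.incomp}. The implication (b)$\Rightarrow$(a) is immediate: since $x,y\in|P|$, an element witnessing (b) also witnesses (a).

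For (a)$\Rightarrow$(c), I would take the element $u$ furnished by (a) and note that it is precisely an element satisfying condition~\eqref{d.u} in the proof of Lemma~\ref{L.incomp}. That proof shows that the existence of such a $u$ forces the relation~\eqref{d.cup=S.3}, namely $({\downarrow}x\cap S)\cup({\uparrow}y\cap S)=S$, which is the first clause of~(c). For the second clause I would argue by contradiction: if we also had $({\downarrow}y\cap S)\cup({\uparrow}x\cap S)=S$, then \eqref{d.cup=S.3} and \eqref{d.cup=S.4} would hold simultaneously, and the Venn-diagram argument in the proof of the lemma would give ${\downarrow}x\cap S={\downarrow}y\cap S$ and ${\uparrow}x\cap S={\uparrow}y\cap S$, these being complementary subsets of $S$; as observed there, it then follows from~\eqref{d.pq_xy} that passing from $\pq$ to $\pq_{x,y}$ changes no comparability between an element of $P$ and an element of $S$ -- contradicting the defining property of $u$. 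Hence $({\downarrow}y\cap S)\cup({\uparrow}x\cap S)\neq S$, completing~(c).

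For (c)$\Rightarrow$(b), I would first record, straight from~\eqref{d.pq_xy}, the four equalities ${\downarrow_{x,y}}\,x={\downarrow}x$, ${\uparrow_{x,y}}\,x={\uparrow}x\cup{\uparrow}y$, ${\downarrow_{x,y}}\,y={\downarrow}x\cup{\downarrow}y$ and ${\uparrow_{x,y}}\,y={\uparrow}y$ (the first two being~\eqref{d.down+up} with $u=x$, the other two obtained the same way). The first clause of~(c) then shows at once that each of $x$ and $y$ is $\pq_{x,y}$-comparable with every element of $S$, so it remains only to show that at least one of $x,y$ is $\pq$-incomparable with some element of $S$ -- that one is then a witness for~(b). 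Suppose instead that both $x$ and $y$ are $\pq$-comparable with all of $S$. The second clause of~(c) then supplies an $s\in S$ lying in neither ${\downarrow}y$ nor ${\uparrow}x$; since $s$ is comparable with $y$ and with $x$ and is distinct from both (as $x,y\notin S$), this forces $y\pq s$ and $s\pq x$, whence $y\pq x$, contradicting the $\pq$-incomparability of $x$ and $y$. This contradiction completes the proof.

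The step I expect to need the most care is the last one: verifying that ``both $x$ and $y$ $\pq$-comparable with all of $S$'' together with the failure of $({\downarrow}y\cap S)\cup({\uparrow}x\cap S)=S$ genuinely manufactures an element of $S$ squeezed between $y$ and $x$, and checking the degenerate cases (in particular $S=\emptyset$, where all three conditions fail) as well as the bookkeeping distinguishing $\pq$ from strict inequality. Everything else is essentially a repackaging of material already established in the proof of Lemma~\ref{L.incomp}.
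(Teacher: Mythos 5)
Your proposal is correct and follows essentially the same route as the paper: the same cycle of implications, with (b)$\Rightarrow$(a) immediate and (a)$\Rightarrow$(c) extracted, exactly as you describe, from the argument surrounding~\eqref{d.u},~\eqref{d.cup=S.3} and~\eqref{d.cup=S.4} in the proof of Lemma~\ref{L.incomp}. The only (minor) divergence is in (c)$\Rightarrow$(b): the paper uses the disjointness of ${\downarrow}x\cap S$ and ${\uparrow}y\cap S$ to pinpoint which of $x,$ $y$ is the witness, whereas you show both are $\pq_{x,y}$-comparable with all of $S$ and then derive a contradiction from assuming both are $\pq$-comparable with all of $S;$ both arguments are sound.
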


\begin{proof}
We shall prove (a)$\implies$(c)$\implies$(b)$\implies$(a).

Given~(a), the proof of Lemma~\ref{L.incomp}
gives us the equality~\eqref{d.cup=S.3} with which~(c) begins.
On the other hand, if alongside~\eqref{d.cup=S.3}
we have equality in place of the inequality in the second condition
of~(c) (i.e., if ~\eqref{d.cup=S.4} also holds),
then the two paragraphs of that proof
following~\eqref{d.cup=S.4} show that $\pq_{x,y}$ does not
introduce any relations between elements of $|P|-S$ and elements of $S$
that don't hold under $\pq,$ contradicting~(a).
So we must also have that inequality; so~(c) indeed holds.

Next, assume~(c).
Note that in the equality
$({\downarrow}x\cap S)\cup({\uparrow}y\cap S) = S,$
the sets ${\downarrow}x\cap S$ and ${\uparrow}y\cap S$
must be disjoint, since if their intersection contained an element $z,$
we would have $y\pq z\pq x,$ contradicting our assumption that
$x$ and $y$ are $\!\pq\!$-incomparable.

Let us use the inequality
$({\downarrow}y\cap S)\cup({\uparrow}x\cap S)\neq S$
to choose a $w\in S$ in neither
${\downarrow}y$ nor ${\uparrow}x.$
By the preceding observation, $w$ lies
either in ${\downarrow}x$ or in ${\uparrow}y,$ but not in both.
If it is in ${\downarrow}x$ but not ${\uparrow}y,$
then it is in neither
${\uparrow}y$ nor ${\downarrow}y,$ showing that $y$ is incomparable
with at least one element of $S.$
Now every $s\in S$ incomparable with $y$ must, by the relation
$({\downarrow}x\cap S)\cup({\uparrow}y\cap S) = S,$
lie in ${\downarrow}x,$ hence $s\pq_{x,y} x\pq_{x,y} y.$
So $y,$ though $\!\pq\!$-incomparable with some elements $s\in S,$
is $\!\pq_{x,y}\!$-comparable with all such elements, which is
the ``$\!y\!$'' case of~(b).
If, on the other hand, our element $w$ lies
in ${\uparrow}y$ rather than ${\downarrow}x,$
the corresponding considerations give the ``$\!x\!$'' case of~(b).
Thus, we have proved (c)$\!\implies\!$(b).

The implication (b)$\!\implies\!$(a) is immediate.
\end{proof}

The proof of Lemma~\ref{L.incomp}
shows how to build up {\em all} linearizations of $|P|-S$ which
extend to orderings of $|P|$ of the sort asserted in the
lemma, by making successive choices of
order on unordered pairs of elements of $|P|-S,$ taken in any order.
The above corollary tells us at each such
step which choices are available
(those {\em not} satisfying~(c)).
We end this section with a more systematic construction of {\em some}
ordering as in that lemma, based on a suggestion of Stefan Felsner
(personal correspondence).

\begin{proof}[Sketch of an alternative proof of Lemma~\ref{L.incomp},
after S.\,Felsner]
Listing the elements of $S$ as $s_1\pq\dots\pq s_r,$ let us
partition $|P|-S$ into disjoint subsets
\begin{equation}\begin{minipage}[c]{35pc}\label{d.T_1...}
$|P|-S\ =\ T_1\ \cup\ T_2\ \cup\ \dots\ \cup\ T_{2r+1}$
\end{minipage}\end{equation}
as follows.
If $x\in|P|-S$ is already
$\!\pq\!$-comparable with all elements of $S,$ say
with $s_i\pq x\pq s_{i+1},$ we assign $x$ to $T_{2i+1},$ with
the obvious modifications in the end-cases, namely, when $x\pq s_1$
we assign it to $T_1,$ and when $s_r\pq x$ we assign $x$ to $T_{2r+1}.$
On the other hand, if $x$ is incomparable with at least one element
of $S,$ let $s_i$ be the largest such element,
and assign $x$ to $T_{2i}.$

It is not hard to check that for $1\leq i< j\leq 2r+1,$
no element of $T_j$ is $\pq$ any element of $T_i.$
Hence we can strengthen the ordering $\pq$ on $|P|-S$ to
make all elements of $T_i$ precede all elements of $T_j$
whenever $i<j,$ keeping the relative order of elements
within each $T_i.$
We can then go further and linearize each $T_i,$
getting a total order $\pq'$ on $|P|-S.$

On $S,$ on the other hand, we let $\pq'$ agree
with $\pq,$ since $\pq$ is already a total order there.

It remains to specify how $\pq'$ should relate elements
of $|P|-S$ and elements of $S.$
If $x\in|P|-S$ belongs to a set $T_{2i+1},$
there is no choice: under $\pq,$ $x$ lies above
all $s_j$ with $j\leq i$ and below all $s_j$ with $j\geq i+1,$
so we give it these same relations under $\pq'.$

If $x\in T_{2i},$ we must again let $x$ lie
below all $s_j$ with $j\geq i+1.$
In this case, there may or may not be choices as to how it
should relate to lower members of $S;$ but we make a choice that
will always work: let $x$ be incomparable with $s_i,$
and lie above all $s_j$ with $j< i.$

It is routine, though tedious, to verify that the relation
$\pq'$ so defined is a partial ordering on $|P|.$
By construction, it is a strengthening of the given ordering $\pq,$
and has the property that
every element of $|P|-S$ that was $\!\pq\!$-incomparable with at
least one element of $S$ (i.e., which belongs to some $|T_{2i}|),$
remains $\!\pq'\!$-incomparable with some element of
$S$ (namely, $s_i).$
This completes the proof of the lemma.
\end{proof}

\section{Appendix: a formula of G.\,Hochschild}\label{S.GPH}

The results of section~\ref{S.ord+} were motivated by a question
Arthur Ogus asked me, on how one might understand,
computationally, a formula of Gerhard Hochschild.
In this appendix, which assumes only that section,
we recover that formula.

Our development is far lengthier than Hochschild's, so its interest
(if any) lies in its different approach to the result,
and in the possibility that the method may be applicable to
questions not as easy to answer by other means.

Hochschild's result (in which I have changed almost
all the notation -- but the translation between his
and mine is straightforward) concerns an
associative ring $R$ of prime characteristic $p,$
a commutative subring $A$ of $R,$ and an element $r\in R$ such
that the commutator map $a\mapsto \ad{r}(a)=ra-ar$
carries $A$ into itself.
What he shows is that for all $a\in A,$
\begin{equation}\begin{minipage}[c]{35pc}\label{d.GPH}
$(ar)^p\ =\ a^p r^p\,+\,\ad{ar}^{p-1}(a)\,r.$
\end{minipage}\end{equation}

Since every term of~\eqref{d.GPH} ends with a factor $r,$
Ogus suggested that the corresponding identity with those
factors removed should hold, namely
\begin{equation}\begin{minipage}[c]{35pc}\label{d.AO}
$(ar)^{p-1}a\ =\ a^p r^{p-1} +\,\ad{ar}^{p-1}(a).$
\end{minipage}\end{equation}

We shall see that this is true.
Precisely, dropping the assumption that $R$ has characteristic
$p,$ we shall prove

\begin{lemma}[{after Hochschild \cite[Lemma~1]{GPH}}]\label{L.GPH}
Let $R$ be an associative ring, $r$ an element
of $R,$ $A$ a commutative subring
of $R$ such that the operation $\ad{r}:a\mapsto ra-ar$
carries $A$ into itself, and $p$ a prime number.
Then the function taking every $a\in A$ to the element
\begin{equation}\begin{minipage}[c]{35pc}\label{d.AO-}
$(ar)^{p-1}a\ -\ a^p r^{p-1}\ -\ \ad{ar}^{p-1}(a)$
\end{minipage}\end{equation}
of $R$
can be written as a noncommutative polynomial in $r$ and the elements
$a,$ $\ad{r}(a),$ $\ad{r}^2(a),$ \dots\,, in which the coefficients
of all monomials are divisible by~$p.$
\end{lemma}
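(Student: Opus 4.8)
The plan is to pass to a universal ring, reduce the assertion to one about normal forms, peel off the two ``extreme'' homogeneous components (which account for the subtracted terms $a^{p}r^{p-1}$ and $\ad{ar}^{p-1}(a)$), and dispatch the remaining components using Corollary~\ref{C.==0}.

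First I would replace $R$ by the differential polynomial ring $U=\Z[a_0,a_1,a_2,\dots][r;D]$ (an Ore extension), where $D$ is the derivation of $\Z[a_0,a_1,\dots]$ with $Da_k=a_{k+1}$; thus $rf=fr+Df$ in $U$, and $U$ is a free abelian group on the ``normal-form'' monomials $a_0^{e_0}a_1^{e_1}\cdots r^{m}$. For any $R,r,A$ as in the lemma and any $a\in A$, the assignment $r\mapsto r$, $a_k\mapsto\ad{r}^{k}(a)$ extends to a ring homomorphism $U\to R$ (the elements $\ad{r}^{k}(a)$ all lie in the commutative ring $A$, and $r\,\ad{r}^{k}(a)-\ad{r}^{k}(a)\,r=\ad{r}^{k+1}(a)$), under which the element $(a_0r)^{p-1}a_0-a_0^{p}r^{p-1}-\ad{a_0r}^{p-1}(a_0)\in U$ maps to~\eqref{d.AO-}. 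Since $U$ is free abelian on its normal-form basis, it therefore suffices to show that this element of $U$ has all normal-form coefficients divisible by $p$; writing it then as a polynomial in $r$ and the $a_k$ produces the polynomial the lemma demands. Along the way I would record the identity $\ad{a_0r}(b)=a_0\,Db$ for $b\in\Z[a_0,a_1,\dots]$ (immediate from $a_0rb-ba_0r=a_0(br+Db)-a_0br=a_0\,Db$), which iterates to $\ad{a_0r}^{p-1}(a_0)=(a_0D)^{p-1}(a_0)$, an element of $\Z[a_0,a_1,\dots]$ with no occurrence of $r$.

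Next I would normal-order $(a_0r)^{p-1}a_0=a_0ra_0r\cdots ra_0$ by moving the $p-1$ factors $r$ rightward one at a time via $rf=fr+Df$. Each such $r$ either ``survives'' into the trailing block $r^{m}$ or is ``consumed'' differentiating one of the factors $a_0$ lying to its right, a factor which later factors $r$ may then differentiate again. A bookkeeping of which $r$ consumes which $a_0$ shows that the coefficient of a monomial $a_{d_1}\cdots a_{d_p}r^{m}$ in $(a_0r)^{p-1}a_0$ counts the maps $\psi$ from $\{1,\dots,p-1\}$ (the factors $r$, left to right) to $\{1,\dots,p\}\cup\{\star\}$ with $\psi(i)>i$ or $\psi(i)=\star$, having $\star$-fibre of size $m$ and $j$-fibre of size $d_j$. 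Two homogeneous pieces drop out at once: the $r^{p-1}$-component of $(a_0r)^{p-1}a_0$ is $a_0^{p}r^{p-1}$ (only the all-$\star$ pattern leaves $p-1$ surviving $r$'s); and, since $(a_0r)^{p-1-k}a_0(a_0r)^{k}$ ends in $r$ for every $k\ge1$, the expansion $\ad{x}^{n}(y)=\sum_{k}(-1)^{k}\binom{n}{k}x^{n-k}yx^{k}$ with $x=a_0r$, $y=a_0$ shows the $r^{0}$-component of $(a_0r)^{p-1}a_0$ equals $\ad{a_0r}^{p-1}(a_0)=(a_0D)^{p-1}(a_0)$. Hence the element of the lemma equals the sum, over $1\le m\le p-2$, of the $r^{m}$-homogeneous components of $(a_0r)^{p-1}a_0$, and it remains to show each of these is divisible by $p$. (For $m=0$ and $m=p-1$ the components are visibly not divisible by $p$, which is precisely why those two terms are subtracted.)

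The crux is thus: for $1\le m\le p-2$, the integer counting the patterns $\psi$ above with prescribed fibre sizes is divisible by $p$. I would prove this by recognising that count --- for each fixed target monomial, and perhaps after an inclusion--exclusion over the fibre data --- as a value $C(S,p)$ (or a small signed combination of such values) of the order-chromatic function of Section~\ref{S.ord+}, for a $3$-tuple $S$ built on a set of at most $p-1$ elements (the factors $r$), so that $\r{card}(|S|)-c+1\le p-1<p$ whatever its partial order and edge set. Corollary~\ref{C.==0} (its final sentence, applied with $n=p$) then forces $p\mid C(S,p)$, hence $p$ divides the coefficient, completing the proof. The main obstacle, I expect, is exactly this last encoding: the ``strictly above the diagonal'' constraint $\psi(i)>i$, together with the prescribed fibre sizes and the distinguished symbol $\star$, must be rephrased as ``isotone maps into $\{1,\dots,p\}$ that separate a prescribed set $E$ of pairs''; choosing $S$ so that this works, while keeping $\r{card}(|S|)-c+1$ below $p$, is where the genuine work lies.
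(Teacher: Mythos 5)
Your reduction is correct and in places cleaner than the paper's: passing to the Ore extension $\Z[a_0,a_1,\dots][r;D]$ makes precise what ``noncommutative polynomial in $r$ and $a,\ad{r}(a),\dots$'' means; the identification of the $r^{p-1}$-component with $a_0^pr^{p-1}$ is right; and your use of $\ad{x}^{n}(y)=\sum_k(-1)^k\binom{n}{k}x^{n-k}yx^{k}$ to read off the $r^0$-component as $\ad{a_0r}^{p-1}(a_0)$ is genuinely slicker than the paper's term-by-term matching of the $(p-1)!$ ``every $r$ acts'' expansions against the expansion of~\eqref{d.a_ad()}. Your description of the remaining coefficients as counting maps $\psi:\{1,\dots,p-1\}\to\{1,\dots,p\}\cup\{\star\}$ with $\psi(i)>i$ or $\psi(i)=\star$ and prescribed fibre sizes is also correct, up to the symmetry factor coming from repeated fibre sizes, which the paper handles at the end by dividing by $\prod_m h_m!$ and noting each $h_m<p.$

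However, the step you defer --- recognizing that count as a value $C(S,p)$ --- is the entire content of the appendix's appeal to section~\ref{S.ord+}, and the hint you give points the wrong way. A partial order and edge set on the set of factors $r$ alone cannot express the constraint $\psi(i)>i$: that compares the \emph{value} of the map at a point with the \emph{name} of that point, whereas Theorem~\ref{T.ord+} admits only isotonicity (comparisons between values at two comparable domain elements) together with $E\!$-separation; the paper explicitly warns that further order conditions on the values $\varphi(x)$ fall outside the theorem's scope. The paper's resolution is to enlarge the domain: for a monomial with nonzero fibre sizes $m_1,\dots,m_\ell$ it takes $|S|$ to consist of the consumed $r\!$'s together with one additional element per nonempty fibre representing the receiving $a,$ arranged as $\ell$ disjoint chains of cardinalities $m_i+1$ with the receiver on top; then ``source index less than target index'' becomes isotonicity within a chain, and $E$ consists of all pairs except those formed by the maximal element of one component and a nonmaximal element of another. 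Note that this $S$ has $\sum m_i+\ell$ elements, which can exceed $p-1$ (for $p=5,$ $m=1,$ three arrows to three distinct targets gives $6>4$), so your blanket estimate ``$\r{card}(|S|)\leq p-1$ whatever the structure'' is not what makes Corollary~\ref{C.==0} applicable; what does is the component count, $\r{card}(|S|)-c+1=\sum m_i+1\leq p-1<p,$ which holds precisely because $1\leq\sum m_i\leq p-2,$ i.e., because the two extreme components were peeled off first. Until an encoding of this kind is actually carried out, the crux of the lemma remains unproved.
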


\begin{proof}
Let us consider the term $(ar)^{p-1}a$ of~\eqref{d.AO-}, and
repeatedly use the formula
\begin{equation}\begin{minipage}[c]{35pc}\label{d.rx}
$rx\ =\ xr\,+\,\ad{r}(x)$\quad for $x\in A$
\end{minipage}\end{equation}
to eliminate occurrences of $r$ preceding elements of $A.$
We begin with the rightmost occurrence of $r,$ which
precedes the final $a;$ an application of~\eqref{d.rx} to that pair of
factors turns $(ar)^{p-1}a$ into a sum of two monomials,
in one of which that $r$ has jumped to the end, while in the
other, it has been absorbed in the process of turning the final $a$
to $\ad{r}(a).$
We then apply~\eqref{d.rx} to the $r$ that was originally
second from the right.
This can either be absorbed in the $a$ immediately to its
right, turning that into $\ad{r}(a),$ or jump past it.
In the latter case it can, in turn, either be absorbed in the
next factor (which is $a$ or $\ad{r}(a)$ depending on which output
of the first step we are looking at, and so is turned into
$\ad{r}(a)$ or $\ad{r}^2(a)$ respectively), or jump past that
factor, becoming an (additional) $r$ at the far right.
We proceed similarly with the third $r$ from the right, and so on.
Since there are two possibilities for the fate of the rightmost $r,$
three for the next, etc., we get $p\,!$ terms.

The choices leading to one of these $p\,!$ terms can be represented
visually by taking the given string
\begin{equation}\begin{minipage}[c]{35pc}\label{d.ar...ra}
$a\,r\,a\,r\,\dots\,r\,a$\qquad(with $p$ $\!a\!$'s
and $p-1$ $\!r\!$'s)
\end{minipage}\end{equation}
and drawing, from each member of some subset of the occurrences of $r,$
an arrow to some occurrence of $a$ to its right.
In the resulting expression, each $a$
that receives $m$ arrows becomes $\ad{r}^m(a)$ (with those receiving
none remaining as $a),$ and the $\!r\!$'s at
the other ends of those arrows are deleted,
while those $\!r\!$'s from which
arrows were not drawn move to the far right.

Note that the output of this process has exactly one term in
which none of the $p-1$ $\!r\!$'s acts on any of the $\!a\!$'s, and
that this term, $a^p r^{p-1},$
is cancelled by the $-a^p r^{p-1}$ of~\eqref{d.AO-}.

At the opposite extreme are the $(p-1)!$ terms in which
{\em every} occurrence of $r$ acts on an element of $A$ (either
an $a,$ or the image of $a$ under previous actions
of other occurrences of $r).$
I claim that the sum of these is precisely
$\ad{ar}^{p-1}(a),$ and thus cancels the
$-\,\ad{ar}^{p-1}(a)$ of~\eqref{d.AO-}.
To see this, note first that for any $x\in A,$
we have $\ad{ar}(x) = arx-xar = arx-axr = a\,\ad{r}(x),$
where the middle equality holds because $A$ is commutative.
Hence $\ad{ar}^{p-1}(a)$ can be written as
\begin{equation}\begin{minipage}[c]{35pc}\label{d.a_ad()}
$(a\,\ad{r}(\,\dots\,(a\,\ad{r}(a\,\ad{r}(a)))...))\,.$
\end{minipage}\end{equation}
Here the rightmost (better: innermost) occurrence
of $\ad{r}$ acts on its argument $a.$
The next occurrence acts on the product $a\ \ad{r}(a),$
so -- since $\ad{r}$ is a derivation, i.e., satisfies
\begin{equation}\begin{minipage}[c]{35pc}\label{d.adrxy}
$\ad{r}(x y)\ =\ \ad{r}(x)\,y\,+\,x\,\ad{r}(y)$ \quad for $x,y\in A,$
\end{minipage}\end{equation}
-- it turns $a\ \ad{r}(a),$ into a sum of two terms, in one of which
it acts on the first factor and in the other on the second.
The action of the next $\ad{r},$ on the product of $a$ with
each of these two-factor terms, gives a sum of three terms; and so on.
The resulting terms can be classified by writing
out the string
\begin{equation}\begin{minipage}[c]{35pc}\label{d.a_ad}
$a\ \ad{r}\ \dots\ a\ \ad{r}\ a\ \ad{r}\ a$
\quad (with $p$ $\!a\!$'s and $p-1$ $\!\ad{r}\!$'s)
\end{minipage}\end{equation}
and drawing an arrow from each $\ad{r}$ to an arbitrary
$a$ to the right of it, on which it acts.
The results are clearly the same as the subset of the expressions
in our expansion of $(ar)^{p-1}a$ in which an
arrow comes out of every occurrence of $r,$ so, as claimed,
these terms cancel the $-\,\ad{ar}^{p-1}(a)$ in~\eqref{d.AO-}.

What remains is to show that in the
expansion of $(ar)^{p-1}a,$ each monomial
\begin{equation}\begin{minipage}[c]{35pc}\label{d.monom}
$\ad{r}^{m_1}(a)\ \ad{r}^{m_2}(a)\ \ad{r}^{m_p}(a)
\ r^{p-1-m_1-\ldots-m_p}$
\end{minipage}\end{equation}
that is {\em not} of either of the above two extreme sorts,
i.e., which satisfies
\begin{equation}\begin{minipage}[c]{35pc}\label{d.neq0,p-1}
$0\ <\ m_1+\dots+m_p\ <\ p-1,$
\end{minipage}\end{equation}
occurs with coefficient divisible by $p.$
Since $A$ is commutative, we are regarding monomials~\eqref{d.monom}
as the same if they differ by a permutation of
the string of exponents $m_1,\dots,m_p.$
Thus, we may make the notational assumption
that the {\em nonzero} exponents in~\eqref{d.monom} form
an initial substring, say $m_1,\dots,m_\ell.$

To determine the coefficient of a monomial~\eqref{d.monom},
we need to count the ways of attaching arrows
to~\eqref{d.ar...ra} that lead to it.
I claim such a system of arrows
will be determined by an appropriately indexed family of
elements of $\{1,\dots,p\}$ (corresponding to the positions of the
$\!r\!$'s from which arrows begin, and the $\!a\!$'s at
which they end) subject to certain inequalities --
and that the set of these indexed families can be identified with a
set of maps of the sort whose cardinality was studied
in section~\ref{S.ord+}.

Indeed, consider any monomial~\eqref{d.monom}, and
let $P$ be a partially ordered set consisting
of $\ell$ connected components, namely, for each $i\leq\ell,$
let the component $P_i$ be a chain of $m_i+1$ elements.

Let us, to begin with, assume for simplicity that the nonzero exponents
in~\eqref{d.monom}, $m_1,\dots,m_\ell,$ are distinct.
Then for $P$ so defined, let us, to each diagram of arrows
on the string of symbols~\eqref{d.ar...ra} that yields the
monomial~\eqref{d.monom}, associate the map
$\varphi:P\to\{1,\dots,p\}$ such that for $i=1,\dots,\ell,$
if the factor $\ad{r}^{m_i}(a)$ in~\eqref{d.monom} arises from
arrows drawn from
the $\!k_1\!$-st, $\!k_2\!$-nd, through $\!k_{m_i}\!$-th
occurrences of $r$ to the $\!k_{m_i+1}\!$-st occurrence
of $a,$ then $\varphi$ maps the successive terms of the chain $P_i$
to the integers $k_1<k_2<\dots<k_{m_i}<k_{m_i+1}.$
(Note the difference between the way $k_1,\dots,k_{m_i}$ are specified
in the above sentence, and way $k_{m_i+1}$ is specified.)

Which maps $P\to\{1,\dots,p\}$ can arise in this way from
arrow-diagrams giving the monomial~\eqref{d.monom}?
It is not hard to see that they will be precisely those
{\em isotone} maps such that no two elements of $P$ fall together,
except that the {\em maximal} element of a component $P_i$ is permitted
to fall together with a {\em nonmaximal} element of a component
$P_j$ if $j\neq i.$
(The images of non-maximal elements of $P$ all
have to be distinct because they represent the
sources of distinct arrows, and the images of maximal
elements must be distinct because they represent the
recipients of distinct families of arrows.
Finally, the images of a non-maximal element and the maximal element
in the same component must be distinct because the $\!i\!$-th
occurrence of $r$ can't have an arrow to the $\!i\!$-th
occurrence of $a,$ since the latter precedes it.
On the other hand, there is no contradiction if for some $i,$
the $\!i\!$-th $a$ is
the recipient of some family of arrows, and the $\!i\!$-th $r$
is the source of an arrow with a different destination.
These are the cases where elements
of $P$ are allowed to fall together.)
So letting $S=(|P|,\pq,E),$ where $(|P|,\pq)$ is the
poset $P$ described above, and $E$ consists of all two-element subsets
of $|P|$ other than those whose members are the maximal element
of one component of $P$ and a nonmaximal element of another
component, we see that the coefficient of~\eqref{d.monom}
in $(ar)^{p-1}a$ is $C(S,p),$ as defined in Theorem~\ref{T.ord+}.

The partially ordered set $P$ has
$(m_1+1)+\dots+(m_\ell+1)=m_1+\dots+m_\ell+\ell$
elements, and $\ell$ connected components, so the term written
$\r{card}(|S|)-c+1$ in Corollary~\ref{C.==0} is
here $m_1+\dots+m_\ell+1.$
By~\eqref{d.neq0,p-1}, $m_1+\dots+m_\ell<p-1,$
hence $m_1+\dots+m_\ell+1<p,$
so $p$ satisfies the condition in the first
sentence of Corollary~\ref{C.==0}.
By the first inequality of~\eqref{d.neq0,p-1}, $P$ is nonempty,
so we can apply the final statement of that corollary
to conclude that $C(S,p)$ is divisible by $p,$ as desired.

What if the $m_i$ are not all distinct?
If a given value $m$ occurs as $h_m$ different $\!m_i\!$'s
(i.e., if after collecting like factors
in~\eqref{d.monom}, $\ad{r}^m(a)$ appears with exponent $h_m),$
then the poset $P$ constructed as above will have $h_m$
\mbox{$\!m{+}1\!$-element} components.
In this situation, for each way the $h_m$ factors $\ad{r}^m(a)$
can arise from an arrow-diagram, we can choose, arbitrarily,
which of those $h_m$ components of $P$
is mapped to which family of $m$ arrows.
This gives $h_m\,!$ possibilities.
We see from this that the coefficient of our monomial~\eqref{d.monom}
in the expansion of $(ar)^{p-1}a$ will now be $C(S,p)/\prod_m h_m!\,.$
But this creates no problem, since each of the $h_m$ is less than $p.$
(Indeed, by~\eqref{d.neq0,p-1},
$p-1>\sum m_i\geq h_m m\geq h_m$ for each $m.)$
So since $C(S,p)$ is divisible by $p,$ $C(S,p)/\prod_m h_m!$
is also, as required.
\end{proof}


\section{Acknowledgements}\label{S.ackn}
I am indebted to Arthur Ogus for asking the question
answered in the above appendix, which
led me into these investigations; to Richard Stanley
for patiently responding to my communications about this
material and pointing me to relevant literature,
and to Bernd Schr\"{o}der
and Stefan Felsner for valuable further correspondence.

\end{document}